\begin{document}

\title{No-collision Transportation Maps
}


\author{Levon Nurbekyan\thanks{L.N. was supported by Simons Foundation and the Centre de Recherches Math\'{e}matiques, through the Simons-CRM scholar-in-residence program, AFOSR MURI FA9550-18-1-0502, and ONR grant  N00014-18-1-2527.}  \and Alexander Iannantuono \and  Adam M. Oberman\thanks{This material is based on work supported by the Air Force Office of Scientific Research under award number FA9550-18-1-0167 and by a grant from the Innovative Ideas Program of the Healthy Brains and Healthy Lives initiative (HBHL), McGill University
}}


\institute{L. Nurbekyan \at
              Department of Mathematics, UCLA\\
              \email{lnurbek@math.ucla.edu}           
           \and
           A. Iannantuono\at
           Department of Mathematics and Statistics, McGill\\
           \email{alexander.iannantuono@mail.mcgill.ca}
           \and
           A. M. Oberman \at
           Department of Mathematics and Statistics, McGill\\
           \email{adam.oberman@mcgill.ca}
           }

\date{Received: date / Accepted: date}

\maketitle

\begin{abstract}
Transportation maps between probability measures are critical objects in numerous areas of mathematics and applications such as PDE, fluid mechanics, geometry, machine learning, computer science, and economics. Given a pair of source and target measures, one searches for a map that has suitable properties and transports the source measure to the target one. Here, we study maps that possess the \textit{no-collision} property; that is, particles simultaneously traveling from sources to targets in a unit time with uniform velocities do not collide. These maps are particularly relevant for applications in swarm control problems. We characterize these no-collision maps in terms of \textit{half-space preserving} property and establish a direct connection between these maps and \textit{binary-space-partitioning (BSP) tree} structures. Based on this characterization, we provide explicit BSP algorithms, of cost $O(n \log n)$, to construct no-collision maps.  Moreover, interpreting these maps as approximations of optimal transportation maps, we find that they succeed in computing nearly optimal maps for $q$-Wasserstein metric ($q=1,2$).  In some cases, our maps yield costs that are just a few percent off from being optimal.
\keywords{no-collision transportation maps \and optimal transportation \and binary-space-partitioning trees \and $k-d$ trees}

\end{abstract}

\section{Introduction}

Given a probability measure $\mu\in \mathcal{P}(\mathbb{R}^d)$ and a Borel measurable map $T:\mathbb{R}^d \to \mathbb{R}^d$ the push-forward of $\mu$ by $T$ is a probability measure $T\sharp \mu$ that is given by
\begin{equation*}
T\sharp\mu(B)=\mu(T^{-1}(B)),~\forall B~\mbox{Borel}.
\end{equation*}
Furthermore, for probability measures $\mu,\nu \in \mathcal{P}(\mathbb{R}^d)$ and a Borel measurable mapping $T:\mathbb{R}^d \to \mathbb{R}^d$ we say that $T$ transports $\mu$ to $\nu$ if
\begin{equation}\label{eq:nu=Tsharpmu}
\nu = T \sharp\mu.
\end{equation}
Heuristically, $T\sharp \mu$ is the measure obtained from $\mu$ by a rearrangement that sends (transports) a point $x$ to $T(x)$. Therefore, one can think of \eqref{eq:nu=Tsharpmu} as a measure respecting correspondence of $\mu$ and $\nu$ where a point $x$ in the support of $\mu$ is being matched with a point $T(x)$ in the support of $\nu$. This perspective readily yields numerous applications of finding a suitable $T$ for given $\mu,\nu$. However, $T$ may not exist in general and, if exists, may not be unique. Therefore, a natural problem is to search for $T$ with some useful properties.

\textit{Optimal transportation} is a remarkably rich theory that addresses this problem \cite{villani'03,villani'09}. In this theory, one associates a cost, $c(x,y)$, for transporting a unit of mass from point $x$ to $y$. Then, one searches for a transportation (correspondence) that minimizes the cost of transporting $\mu$ into $\nu$; that is,
\begin{equation}\label{eq:ot_monge}
\min_{\nu=T\sharp \mu} \int_{\mathbb{R}^d}c(x,T(x))d\mu(x).
\end{equation}

This setup of the transportation problem turns out to be incredibly fruitful far beyond the question of finding a $T$ such that \eqref{eq:nu=Tsharpmu} holds. Indeed, applications of optimal transportation theory include PDE, fluid mechanics, geometry, machine learning, computer science, economics, and so on \cite{villani'03,villani'09,peyre'19}. However, together with extremely useful theoretical properties \eqref{eq:ot_monge} comes with a considerable computational cost. Hence, development of fast algorithms for solving or estimating \eqref{eq:ot_monge} is vital for the applications.

In this note, we are motivated by particular applications of the transportation problem \eqref{eq:nu=Tsharpmu} in swarm control. Specifically, we aim at finding $T$ that have \textit{no-collision} property; that is, for every $x_1\neq x_2 \in \mathrm{supp}(\mu)$ one has that $(1-\lambda)x_1+ \lambda T(x_1)\neq (1-\lambda)x_2+ \lambda T(x_2)$ for all $\lambda\in (0,1)$. In other words, if we transport mass (particles) along straight lines with constant speeds in a unit time, the point-masses will not collide on the way. Recall that $\left((1-\lambda)\mathrm{Id}+\lambda T\right) \sharp \mu $ is the \textit{displacement interpolation} measure between $\mu$ and $\nu$ that is often used as a shape combination or interpolation technique in imaging. Thus, no-collision property can be interpreted as \textit{no information loss} in that context.

Optimal transportation maps corresponding to $c(x,y)=h(x-y)$ with a strictly convex and even $h$ do have no-collision property \cite{villani'03,villani'09,ambrosio'08}. However, these maps are expensive to calculate and normally require global optimization. The Hungarian algorithm for assignment problem \cite{schrijver2003combinatorial} is $O(n^3)$. The transportation plan computed by linear programming is potentially faster, but still requires optimization. The entropic regularization  \cite{cuturi2013sinkhorn} is explicit and fast, but has an additional regularization parameter which affects both the quality (plan versus map) and the cost of the solution.

To the best of our knowledge, the only algorithms that achieve remarkable near-linear complexity are the ones in \cite{altschuler2018approximating} (see also \cite{cuturi2013sinkhorn,cuturi'14,Genevay2016,Altschuler2017,peyre'19,schmitzer2019stabilized} for preceding work) and \cite{matt'18}. The algorithm in \cite{altschuler2018approximating} achieves a complexity $\tilde{O}\left(\frac{n}{\epsilon^3}\left(\frac{C\log n}{\epsilon} \right)^d\right)$ for $c(x,y)=|x-y|^2$ , where $\epsilon$ is the additive approximation error for the optimal cost. Bases of the algorithm are the entropic regularization of the optimal transportation problem \cite{cuturi2013sinkhorn} and low rank approximations of the Gaussian kernel matrices. However, the algorithm returns an \textit{approximate transportation plan} instead of a map. Additionally, this plan is in an \textit{implicit factored form}. Thus, calculation of the corresponding transportation map will impose quadratic complexity. Finally, the resulting map may not have the no-collision property due to an approximation error.

In \cite{matt'18}, the authors achieve a complexity $O(n\log n)$ based on a clever back-and-forth $H^1$ gradient ascent scheme on the dual formulation of \eqref{eq:ot_monge} and a fast calculation of $c$-transforms (generalized Legendre transforms) for $c(x,y)=\sum_{i=1}^d h_i(x_i-y_i)$. However, the algorithm requires a regular grid discretization.

We propose an explicit binary-space-partitioning (BSP) algorithm that constructs no-collision transportation maps in $O(n \log n)$ time in full generality without any optimization. We achieve this goal in two steps. First, we provide a complete characterization of no-collision maps in terms of \textit{half-space preserving} property. More precisely, in Theorem \ref{thm:no_collision_char}, we show that a map has no-collision property iff for every pair of points in the source there exists a hyperplane that separates these points together with a parallel hyperplane separating the images of these points. In particular, any map that respects BSP tree structures on reference and target measures induced by the same partitioning directions is a no-collision map. This is a critical connection between BSP trees and no-collision maps that was not observed previously.

Second, we build transportation maps induced by BSP tree structures on reference and target measures.
More precisely, given $\mu,\nu$, we partition $\mathrm{supp}(\mu),~\mathrm{supp}(\nu)$ and resulting subsets by a sequence of hyperplanes so that each resulting subset has equal masses on two sides of the partitioning hyperplane. Furthermore, to every point in $\mathrm{supp}(\mu)$ and $\mathrm{supp}(\nu)$, we assign a binary sequence that records in which side of partitioning hyperplanes the point fell during the construction. These binary sequences correspond to BSP tree structures on $\mathrm{supp}(\mu),~\mathrm{supp}(\nu)$ where nodes correspond to partitioning hyperplanes. Finally, the transportation map matches the leaves in $\mathrm{supp}(\mu)$ and $\mathrm{supp}(\nu)$.

Our analysis yields a flexible construction method since we allow for an arbitrary sequence of partitioning directions as long as constituent subsets shrink in diameter. A natural choice corresponds to successive partitions by hyperplanes orthogonal to elements of a basis. In Theorem \ref{thm:half_space_pres_map}, we prove that such partitions induce a well-defined transportation map. Moreover, in Theorem \ref{thm:hat(t)_cty}, we show that these maps are a.e. continuous.

Our method for constructing transportation maps depends on decomposing each measure recursively, using the half-spaces with the same direction vectors at each step of the recursion. In this case, the construction does not involve optimization: only a median search. Therefore, our run-time is $O(n\log n)$ by using a $O(n)$ median-search algorithm \cite{blum'73,musser'97}.

This method can also be used to decompose a single measure, that corresponds to a $k-d$ tree decompositions. These are particular instances of BSP trees and are used for nearest neighbor search~\cite{indyk2004nearest}.  Typically, the setting is low dimensional, and the direction vectors are standard basis vectors.  An important recent application of nearest neighbor search in high dimensions is  image retrieval \cite{johnson2019billion}.   In this case, modern methods use a deep neural network to  output a feature vector from an image. The cosine of the angle between feature vectors is used as a distance.  Given a new image, similar images are found by nearest neighbor search using the distance on the feature vectors.    However, $k-d$ trees are less effective for high dimensional data, since nearest neighbors can lie on different sides of the hyperplanes used to bisect the data.  An alternative in high dimensions is to use randomized vectors \cite{dasgupta2008random,dasgupta2013randomized}.  A further improvement is high dimensions is to use multiple randomized $k-d$ trees, \cite{li2017fast}, since multiple tree reduce the possibility of an unfortunate splitting.

BSP and $k-d$ trees are ubiquitous tools in computer science and image processing \cite{indyk2004nearest,radha96,Wu92}. Nevertheless, to the best of our knowledge, current work is the first one to establish a direct connection between these structures and efficient constructions of transportation maps.

One can also optimize for the partitioning directions. For instance, one can search for directions that minimize transportation cost. Here, we do not address this problem which is a subject of our future research.

Given partitioning directions, BSP trees induce a total order on $\mathrm{supp}(\mu)$ and $\mathrm{supp}(\nu)$. Furthermore, the no-collision transportation map corresponding to this partitioning sequence is the unique \textit{order-preserving} or \textit{monotone} map between $\mathrm{supp}(\mu)$ and $\mathrm{supp}(\nu)$ that respects this order.

Maps that we obtain have several critical properties. Firstly, these are no-collision maps; that is, point-masses do not collide when we perform the transportation dynamically in a unit time. Secondly, these maps are scale-consistent. More precisely, the map on a coarser scale is the concatenation of maps on a finer scale: Proposition \ref{prp:synthesis}. Thirdly, these are transitive maps; that is, for a fixed sequence of partitioning directions, the map from $\mu$ to $\nu$ is the composition of the maps from $\mu$ to $\rho$ and $\rho$ to $\nu$ for any $\mu,\nu,\rho$: Proposition \ref{prp:transitivity}. The last two properties yield that the calculations of these maps are amenable to decentralization and parallelization techniques that can further enhance the speed and robustness of the calculations.

Since no-collision maps are closely related to optimal transportation maps, a natural question is to test the performance of these maps as estimators for the optimal maps. We perform tests in the two-dimensional case and successively choose horizontal and vertical partitioning directions. Interestingly, we find that these maps succeed in achieving nearly optimal costs for $c(x,y)=\|x-y\|^q_p$ for $p=1,2,\infty$ and $q=1,2$.

In our future research, we plan to investigate further these approximation results. Particularly interesting are the questions related to the characterization of distortion and approximation errors of these no-collision maps. Recall that approximate transportation maps are critical in creating nearest neighbor databases based on optimal transportation distances \cite{Indyk:2003,Indyk:2004,Indyk:2007}.

The paper is organized as follows. In Section \ref{sec:hsp_maps}, we fully characterize no-collision transportation maps in terms of the half-space-preserving property: Theorem \ref{thm:no_collision_char}. Furthermore, we prove our main existence and regularity results for half-space preserving maps, Theorems \ref{thm:half_space_pres_map} and \ref{thm:hat(t)_cty}. Next, in Section \ref{sec:hsp_maps_further}, we discuss further critical properties of half-space-preserving transportation maps. In Section \ref{sec:numerics}, we describe the implementation of our algorithm and present the numerical experiments. Finally, in the Appendix, we present the proofs of Theorems \ref{thm:half_space_pres_map} and \ref{thm:hat(t)_cty}.

\section{Half-space-preserving transportation maps}\label{sec:hsp_maps}

Here, we characterize the no-collision maps in terms of a half-space-preserving property. Based on this characterization, we introduce a new class of transportation maps that have the no-collision and other intriguing properties. In particular, the construction of these maps is very quick because it requires only finding medians.
\begin{definition}\label{eq:halfspace_preserving}
	A map $T:\Omega \subset \mathbb{R}^d\to \mathbb{R}^d$ is half-space-preserving if for every $x_1\neq x_2 \in \Omega$ there exists a direction $v \in \mathbb{S}^{d-1}$ such that
	\begin{equation*}
	x_2\cdot v-x_1 \cdot v\geq 0,\quad T(x_2)\cdot v-T(x_1) \cdot v\geq 0,
	\end{equation*}
	and at least one of the inequalities is strict.
\end{definition}
Geometrically, $T$ is half-space-preserving if for every $x_1,x_2$ there exists a pair of parallel hyperplanes that separate $x_1$ from $x_2$ and $T(x_1)$ from $T(x_2)$ in such a way that $x_i$ and $T(x_i)$ are on the same side of the corresponding hyperplane for $i=1,2$. It turns out that the half-space-preserving transportation maps are precisely the ones with the no-collision property.
\begin{theorem}\label{thm:no_collision_char}
	A map $T:\Omega \subset \mathbb{R}^d\to \mathbb{R}^d$ is half-space-preserving if and only if it has the no-collision property.
\end{theorem}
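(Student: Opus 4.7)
The plan is to reduce both directions to a statement about the two vectors $a:=x_2-x_1$ and $b:=T(x_2)-T(x_1)$ in $\mathbb{R}^d$. Since
\begin{equation*}
\bigl[(1-\lambda)x_2+\lambda T(x_2)\bigr]-\bigl[(1-\lambda)x_1+\lambda T(x_1)\bigr]=(1-\lambda)a+\lambda b,
\end{equation*}
the no-collision condition is precisely that $(1-\lambda)a+\lambda b\neq 0$ for every $\lambda\in(0,1)$, i.e., the open segment joining $a$ and $b$ avoids the origin. The half-space-preserving condition is that there exists $v\in\mathbb{S}^{d-1}$ with $v\cdot a\geq 0$, $v\cdot b\geq 0$, and strict inequality in at least one of these.

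For the implication half-space-preserving $\Rightarrow$ no-collision, I would simply take the inner product of $(1-\lambda)a+\lambda b$ with the separating direction $v$. Since $1-\lambda$ and $\lambda$ are strictly positive on $(0,1)$, combining two nonnegative quantities at least one of which is strictly positive with strictly positive weights yields a strictly positive value, so the vector itself is nonzero.

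For the converse, I would argue geometrically on the closed segment $[a,b]\subset\mathbb{R}^d$. Note that $a\neq 0$ because $x_1\neq x_2$. The no-collision hypothesis excludes $0$ from the open segment, so if $0\in[a,b]$ at all, it must be the endpoint $b$, handled separately by taking $v=a/\|a\|$ (giving $v\cdot a=\|a\|>0$ and $v\cdot b=0$). In the remaining case $0\notin[a,b]$, I would apply the strict separation form of the Hahn--Banach theorem to the compact convex set $[a,b]$ and the point $\{0\}$, producing $v\in\mathbb{S}^{d-1}$ with $v\cdot a>0$ and $v\cdot b>0$; this gives the strict-in-both form of the half-space-preserving condition.

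The only mild subtlety, which is the main thing to be careful about, is the degenerate case $b=0$ (equivalently $T(x_1)=T(x_2)$, which is allowed since $T$ need not be injective a priori): here $0$ sits on the boundary of $[a,b]$ without lying on the open segment, so the strict separation argument does not apply and I must handle it by hand as above. Once that case is isolated, both directions of the equivalence follow by short arguments, and the proof structure will essentially be: (i) translate to the $(a,b)$-formulation, (ii) forward direction by linearity of the inner product, (iii) reverse direction by Hahn--Banach with the trivial case $b=0$ treated separately.
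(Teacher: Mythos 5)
Your proof is correct, and while the forward direction is essentially the paper's argument (pairing the interpolated difference $(1-\lambda)a+\lambda b$ against the separating direction $v$; you do it directly, the paper by contradiction), your converse takes a genuinely different route. The paper argues contrapositively and purely by linear algebra: if the half-space-preserving condition fails for $x_1\neq x_2$, then every $v$ orthogonal to $a=x_2-x_1$ must also be orthogonal to $b=T(x_2)-T(x_1)$, forcing $b=\kappa a$; testing $v=a/\|a\|$ then gives $\kappa<0$, and the collision time $\lambda$ is exhibited explicitly. You instead argue directly: no-collision means the origin avoids the open segment $(a,b)$, so either $b=0$ (your correctly isolated degenerate case, where $v=a/\|a\|$ works with one inequality strict and one an equality) or $0\notin[a,b]$ and strict separation of the point $0$ from the compact convex set $[a,b]$ produces a $v$ with both inequalities strict. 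Your approach buys a cleaner conceptual picture (the equivalence is really about whether a segment passes through the origin) and generalizes verbatim to infinite-dimensional settings via Hahn--Banach, at the cost of invoking the separation theorem; the paper's argument is more elementary and, as a byproduct, identifies exactly how the property can fail (only when $T(x_2)-T(x_1)$ is a negative multiple of $x_2-x_1$) together with the explicit collision parameter. Both are complete proofs.
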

\begin{proof}Assume that $T$ is a half-space-preserving map, and $x_1\neq x_2 \in \Omega$. Thus, there exists $v\in \mathbb{S}^{d-1}$ is such that
	\begin{equation*}
	x_2\cdot v-x_1 \cdot v\geq 0,\quad T(x_2)\cdot v-T(x_1) \cdot v\geq 0,
	\end{equation*}
	and at least one of these inequalities is strict. Furthermore, assume by contradiction that there exists $\lambda \in (0,1)$ such that
	\begin{equation*}
	(1-\lambda)(x_1-x_2)+\lambda (T(x_1)-T(x_2)) =0,
	\end{equation*}
	or equivalently
	\begin{equation*}
	T(x_1)-T(x_2)= \frac{\lambda}{1-\lambda} (x_2-x_1).
	\end{equation*}
	But then we have that
	\begin{equation*}
	\begin{split}
	T(x_2)\cdot v-T(x_1) \cdot v=-\frac{\lambda}{1-\lambda}(x_2\cdot v -x_1\cdot v)\leq 0,
	\end{split}
	\end{equation*}
	and so
	\begin{equation*}
	x_2\cdot v-x_1 \cdot v= T(x_2)\cdot v-T(x_1) \cdot v = 0,
	\end{equation*}
	that is a contradiction. Therefore, half-space-preserving maps possess the no-collision property.

	For the converse implication, assume that $T$ has the no-collision property, and, by a contradiction, assume that there exist $x_1\neq x_2\in \Omega$ for which the half-space-preserving condition does not hold. Hence, for every $v\in \mathbb{S}^{d-1}$ such that
	\begin{equation*}
	x_2 \cdot v - x_1 \cdot v = 0
	\end{equation*}
	one has that
	\begin{equation*}
	T(x_2) \cdot v - T(x_1) \cdot v = 0.
	\end{equation*}
	Therefore, there exists $\kappa \in \mathbb{R}$ such that
	\begin{equation*}
	T(x_2)-T(x_1)=\kappa (x_2-x_1).
	\end{equation*}
	Next, take $v=\frac{x_2-x_1}{|x_2-x_1|}$. Then, we have that
	\begin{equation*}
	x_2\cdot v-x_1\cdot v >0.
	\end{equation*}
	Furthermore, since $x_1,x_2$ do not satisfy the half-space-preserving condition
	\begin{equation*}
	T(x_2)\cdot v-T(x_1)\cdot v = \kappa |x_2-x_1|< 0,
	\end{equation*}
	and so $\kappa<0$. But then, we have that
	\begin{equation*}
	(1-\lambda)(x_1-x_2)+\lambda (T(x_1)-T(x_2)) =0,
	\end{equation*}
	where
	\begin{equation*}
	\lambda=\frac{\kappa}{\kappa-1} \in (0,1).
	\end{equation*}
	Hence, we arrive at a contradiction with the no-collision property.
\end{proof}

Theorem \ref{thm:no_collision_char} is the starting point for the construction of our transportation maps; that is, we incorporate the half-space-preserving property in the construction. More precisely, assume that $\mu\in \mathcal{P}(\mathbb{R}^d)$ is absolutely continuous and $\{v_k\}$ is an arbitrary sequence of directions. Denote by
\begin{equation*}
\Omega_0=\{\mathbb{R}^d\}.
\end{equation*}
Since $\mu$ is absolutely continuous there exists $h_1\in \mathbb{R}$ such that
\begin{equation*}
\mu(x\cdot v_1 \leq h_1 ) = \mu(x\cdot v_1 > h_1 ).
\end{equation*}
Then we denote by
\begin{equation*}
A_{0}=\{x\cdot v_1 \leq h_1 \},\quad A_{1}= \{x\cdot v_1 > h_1 \},\quad \Omega_1=\{A_{0},A_{1}\}.
\end{equation*}
Furthermore, to every $x\in A$ we assign a binary sequence, $s(x)$, as follows
\begin{equation*}
s(x)=\begin{cases}
0,~x \in A_0,\\
1,~x\in A_1.
\end{cases}
\end{equation*}
Then, we pick a set from $\Omega_1$, say $A_1$, and find $h_2$ such that
\begin{equation*}
\mu(A_1 \cap \{x\cdot v_2 \leq h_2\} ) = \mu(A_1 \cap \{x\cdot v_2 > h_2\}),
\end{equation*}
and denote by
\begin{equation*}
A_{10}=A_1 \cap \{x\cdot v_2 \leq h_2 \},\quad A_{11}=A_1 \cap \{x\cdot v_2 > h_2 \},\quad \Omega_2=\{A_0,A_{10},A_{11}\}.
\end{equation*}
Next, we update $s(x)$ for $x\in A_1$ as follows
\begin{equation*}
s(x)=\begin{cases}
10,~x \in A_{10},\\
11,~x\in A_{11}.
\end{cases}
\end{equation*}
The process continues by picking a subset and dividing it into two pieces of equal masses and updating the corresponding binary sequences. As a result, points in the support of $\mu$ receive a binary sequence $s(x)$. If directions $\{v_k\}$ are chosen appropriately, every two points in $\mathrm{supp}(\mu)$ get separated at some step, and therefore the sequence $s(x)$ is a bijection between $\mathrm{supp}(\mu)$ and $\{0,1\}^\infty$. This procedure, with the same choice of directions and subsets, can be performed for any absolutely continuous measure $\nu$. Suppose that the binary sequence of $\nu$ is $r(x)$. Then, the transportation map from $\mu$ to $\nu$ is
\begin{equation*}
t=r^{-1}\circ s.
\end{equation*}
Although clear intuitively, a rigorous construction of this previous map is a technically challenging task. Denote by
\begin{equation*}
\begin{split}
\hat{s}_0(x)=&0,\quad x\in \mathbb{R}^d,\\
\hat{s}_1(x)=&\begin{cases}
0\cdot\frac{1}{3},~x \in A_0,\\
1\cdot\frac{1}{3},~x\in A_1,
\end{cases}\\
\hat{s}_2(x)=&\begin{cases}
0,~x\in A_0,\\
1\cdot \frac{1}{3}+0\cdot \frac{1}{3^2},~x\in A_{10},\\
1\cdot \frac{1}{3}+1\cdot \frac{1}{3^2},~x\in A_{11},
\end{cases}\\
\vdots
\end{split}
\end{equation*}
So $\hat{s}_k(x),~x\in A$ is the real number in the ternary system with digits $s(x)$.
\begin{lemma}\label{lma:s_k_convergence}
	The family $\{\hat{s}_k\}$ is a bounded, non-decreasing, and uniformly-convergent sequence of measurable step-functions with a limit
	\begin{equation}\label{eq:hat_s}
	\hat{s}(x)=\lim\limits_{k\to \infty} \hat{s}_k(x),\quad x\in \mathbb{R}^d.
	\end{equation}
	Moreover, if for some $k\in \mathbb{N}$ points $x\neq x'$ get separated by a hyperplane parallel to $v_k$, then
	\begin{equation*}
	\hat{s}(x)\neq \hat{s}(x').
	\end{equation*}
\end{lemma}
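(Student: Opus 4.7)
The plan is to handle the four properties in order, exploiting the fact that passing from $\hat{s}_k$ to $\hat{s}_{k+1}$ corresponds to one bisection of a single cell $A_{s_1 \cdots s_j}$ into $A_{s_1 \cdots s_j 0}$ and $A_{s_1 \cdots s_j 1}$ via the hyperplane $\{x \cdot v_{k+1} = h_{k+1}\}$. Boundedness is immediate: since every digit lies in $\{0,1\}$, each $\hat{s}_k(x) \in [0,\, \sum_{i \geq 1} 3^{-i}] = [0,1/2]$. The step-function and measurability claims follow because each cell $A_{s_1 \cdots s_j}$ is a finite intersection of affine half-spaces (hence Borel), and $\hat{s}_k$ is constant on each cell of the finite partition $\Omega_k$. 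Monotonicity is direct from the construction: points outside the cell being refined at step $k+1$ see no change, while points inside either acquire the appended digit $0$ (so $\hat{s}_{k+1}(x) = \hat{s}_k(x)$) or the appended digit $1$ (so $\hat{s}_{k+1}(x) = \hat{s}_k(x) + 3^{-(j+1)}$). In either case $\hat{s}_{k+1} \geq \hat{s}_k$ pointwise.

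For uniform convergence, I would let $d_k(x)$ denote the length of the binary label of the cell in $\Omega_k$ containing $x$, and observe that by monotonicity and the digit bound,
\[
0 \leq \hat{s}(x) - \hat{s}_k(x) \leq \sum_{i = d_k(x)+1}^{\infty} 3^{-i} = \tfrac{1}{2} \cdot 3^{-d_k(x)}.
\]
Uniform convergence therefore reduces to $\min_x d_k(x) \to \infty$ as $k \to \infty$, which in turn requires that the scheduling of which cell is bisected be \emph{fair}: every currently-existing cell must eventually be subdivided. This is the implicit hypothesis on the construction that I would need to make explicit, and it is the step I expect to be the main obstacle. A standard round-robin or breadth-first traversal of the growing binary tree is enough to guarantee it, and this is what I would invoke.

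For the separation statement, suppose $x \neq x'$ are split at some step $k$ by the hyperplane $\{x \cdot v_k = h_k\}$. Then at that step they receive different appended digits, so their binary labels $s(x)$ and $s(x')$ first disagree at some position $j \leq k$. Writing the digits as $s_i(\cdot)$, extending by zeros whenever a label has length less than $i$, and assuming without loss of generality that the $j$-th digit is $0$ for $x$ and $1$ for $x'$, we get
\[
\hat{s}(x') - \hat{s}(x) = 3^{-j} + \sum_{i > j} \frac{s_i(x') - s_i(x)}{3^i} \geq 3^{-j} - \sum_{i > j} 3^{-i} = \tfrac{1}{2} \cdot 3^{-j} > 0,
\]
so $\hat{s}(x) \neq \hat{s}(x')$. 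The use of base $3$ (rather than $2$) is precisely what buys the strict inequality: the tail of a base-$3$ expansion with digits in $\{0,1\}$ cannot overwhelm a single-digit difference at position $j$.
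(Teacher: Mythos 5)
Your treatment of boundedness, monotonicity, measurability, and the separation estimate is essentially the paper's own: the separation bound $\hat{s}(x')-\hat{s}(x)\ge 3^{-j}-\sum_{i>j}3^{-i}=\tfrac12\,3^{-j}$, with $j$ the position of the first differing digit, is exactly the paper's tail-domination argument (and your indexing is in fact cleaner than the paper's, which writes the bound as $\frac{1}{2\cdot 3^{l}}$ with $l$ the common label length \emph{before} the split, where the computation actually yields $\frac{1}{2\cdot 3^{l+1}}$). The one place you genuinely diverge is uniform convergence. The paper disposes of it with the terse assertion $0\le\hat{s}_{k+1}(x)-\hat{s}_k(x)\le C/k$; you instead reduce uniform convergence to $\min_x d_k(x)\to\infty$ and correctly observe that the construction as described (an arbitrary cell is picked at each step) does not guarantee this without a fairness assumption on the scheduling, which you then posit. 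Making that hypothesis explicit is honest, but it is avoidable, because your reduction is only to a \emph{sufficient} condition: uniform convergence holds for any scheduling. Indeed, $M_k=\sup_x\bigl(\hat{s}(x)-\hat{s}_k(x)\bigr)$ is non-increasing by monotonicity; if $M_k\ge\delta>0$ for all $k$, then each witness $x_k$ satisfies $\delta\le\tfrac12\,3^{-d_k(x_k)}$, so it lies in a cell of $\Omega_k$ whose label length is bounded by some $L=L(\delta)$. Only finitely many cells with label length at most $L$ are ever created (one per binary string), so by pigeonhole some such cell $A$ belongs to $\Omega_k$ for infinitely many $k$, hence---since a cell leaves the partition permanently once subdivided---for all large $k$, and is never refined. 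But then $\hat{s}_k\equiv\hat{s}$ on $A$ for all large $k$, contradicting $\hat{s}(x_k)-\hat{s}_k(x_k)\ge\delta$. In short, cells that are never refined contribute nothing to the supremum, so the ``main obstacle'' you flag is not actually one; with that substitution your proof is complete and, on this point, more rigorous than the paper's.
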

\begin{proof}
	For all $x,k$ we have that
	\begin{equation*}
	0\leq \hat{s}_k(x)\leq \sum_{i=1}^\infty \frac{1}{3^i}=\frac{1}{2}.
	\end{equation*}
	By construction, the value $\hat{s}_k(x)$ at step $k+1$ can either stay the same or increase by a power of $\frac{1}{3}$. Hence,
	\begin{equation*}
	0 \leq \hat{s}_{k+1}(x)-\hat{s}_k(x) \leq \frac{C}{k},~\forall x\in A,
	\end{equation*}
	for some universal constant $C$.

	Furthermore, suppose that $x\neq x'$ and they get separated at step $k$ for the first time, and denote by $l$ the length of $s(x)=s(x')$ right before step $k$. Then, we get that
	\begin{equation*}
	|\hat{s}_n(x)-\hat{s}_n(x')| \geq \frac{1}{3^l}-\sum_{i=l+1}^\infty \frac{1}{3^i}=\frac{1}{2\cdot 3^l},
	\end{equation*}
	for all $n\geq k$. Therefore, one has that
	\begin{equation*}
	|\hat{s}(x)-\hat{s}(x')|\geq \frac{1}{2\cdot 3^l}>0.
	\end{equation*}
\end{proof}
For a generic absolutely continuous $\nu \in \mathcal{P}(\mathbb{R}^d)$ we denote by $\Delta_k$ the partition sets at step $k$. Furthermore, we denote by $\{\hat{r}_k\}$ and $\hat{r}$ the corresponding step-functions and the limiting function. We call a pair $(A,B)\in \Omega_k \times \Delta_k$ dual if $\hat{s}_k(x)=\hat{r}_k(y)$ for $x\in A,y \in B$. The following theorem is a technical result necessary for the proof of our main existence theorem.

\begin{theorem}\label{thm:hat(t)}
	Suppose that $\mu,\nu$ are absolutely continuous probability measures, $\{v_k\}_{k=1}^\infty$ are some partitioning directions, and $\hat{s}, \hat{r}$ are functions defined by \eqref{eq:hat_s} for $\mu, \nu$, respectively. Furthermore, define a function
	\begin{equation*}\label{eq:hat_t}
	\hat{t}= \hat{r}^{-1} \circ \hat{s}:\mathbb{R}^d \to \mathbb{R}^d,
	\end{equation*}
	where
	\begin{equation*}
	\mathrm{Dom}(\hat{t}) \subset \{x\in \mathbb{R}^d~\mbox{s.t.}~\hat{r}^{-1}(\hat{s}(x))~\mbox{is a singleton}  \}.
	\end{equation*}
	Then one has that
	\begin{equation}\label{eq:hat_t^-1(B)}
	\hat{t}^{-1}(B)=\mathrm{Dom}(\hat{t}) \cap A,~\forall B \in \bigcup_{k=1}^\infty \Delta_k,
	\end{equation}
	where $A \in \bigcup_{k=1}^\infty \Omega_k$ is the dual set of $B$.

	Consequently, if points in $\mathrm{Im}(\hat{t})$ get eventually separated then $\hat{t}$ is a half-space-preserving map.
\end{theorem}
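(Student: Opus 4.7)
The plan is to reduce cell-membership in the partitions to a ternary-encoding inequality and then leverage the duality between the $\mu$- and $\nu$-partitions. First, I would establish the key lemma: for $A \in \Omega_k$ with binary address $a_1 \cdots a_m$ (length $m \leq k$) and ternary value $c := \sum_{i=1}^{m} a_i/3^i$, an element $x \in \mathrm{supp}(\mu)$ lies in $A$ if and only if $\hat{s}(x) \in I_a := [c, c + 1/(2 \cdot 3^m)]$. The forward direction follows from Lemma \ref{lma:s_k_convergence}, since the future ternary digits contribute at most $\sum_{i > m} 1/3^i = 1/(2 \cdot 3^m)$. For the converse, two distinct sets in $\Omega_k$ have non-prefix-comparable addresses, so their $c$-values differ by at least $1/(2 \cdot 3^{\min(m,m')})$, which makes the cells $I_a$ associated with different partition sets pairwise disjoint (modulo a single shared endpoint). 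The analogous characterization holds for $\hat{r}$ on $\Delta_k$.

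Given this lemma, \eqref{eq:hat_t^-1(B)} is essentially immediate: duality of $A$ and $B$ means they share the address and hence the cell $I_a$, and for $x \in \mathrm{Dom}(\hat{t})$ the chain
\[
x \in A \iff \hat{s}(x) \in I_a \iff \hat{r}(\hat{t}(x)) \in I_a \iff \hat{t}(x) \in B
\]
yields $\hat{t}^{-1}(B) = \mathrm{Dom}(\hat{t}) \cap A$, where the middle step uses $\hat{r} \circ \hat{t} = \hat{s}$ built into the definition of $\hat{t}$, and the outer two use the cell-membership lemma applied to $\hat{s}$ and $\hat{r}$. For the half-space-preserving consequence, given $x_1 \neq x_2 \in \mathrm{Dom}(\hat{t})$, if $\hat{t}(x_1) = \hat{t}(x_2)$ then the choice $v := (x_2-x_1)/|x_2-x_1|$ satisfies Definition \ref{eq:halfspace_preserving} trivially (strict on the source side, zero on the image side). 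Otherwise, the eventual-separation hypothesis on $\mathrm{Im}(\hat{t})$ supplies a first step $k$ at which $\hat{t}(x_1), \hat{t}(x_2)$ fall in distinct cells $B_1, B_2 \in \Delta_k$, necessarily the two halves of a common parent split at step $k$ by a hyperplane orthogonal to $v_k$. Applying \eqref{eq:hat_t^-1(B)} places $x_i$ in the dual $A_i$ of $B_i$, and since duality preserves the binary address (in particular the final bit encoding the side of the split), $A_1, A_2$ are separated by the same direction $v_k$ with the matching orientation, so $v := v_k$ realizes the half-space-preserving condition.

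The principal obstacle I anticipate is the treatment of boundary points, where $\hat{s}(x)$ or $\hat{r}(y)$ may land exactly at a shared endpoint of adjacent cells $I_a$, creating genuine ambiguity about cell-membership. The singleton condition embedded in the definition of $\mathrm{Dom}(\hat{t})$ should dispose of this on the image side, while the absolute continuity of $\mu, \nu$ assigns zero mass to every partition hyperplane and handles the source side. A secondary technicality is to verify that the refinement strategy selecting which subset to split at each step is implicitly coupled between $\mu$ and $\nu$ through duality, so that the direction $v_k$ is genuinely applied to dual subsets on both sides at step $k$; without this coupling the "matching orientation" argument in the last paragraph would break down.
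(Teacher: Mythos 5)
Your proposal is correct and follows essentially the same route as the paper's proof: the cell-membership lemma is exactly the paper's identity $A=\hat{s}^{-1}(I)$, $B=\hat{r}^{-1}(I)$ for dual pairs, the chain of equivalences through $\hat{r}\circ\hat{t}=\hat{s}$ is the paper's derivation of \eqref{eq:hat_t^-1(B)}, and the two-case argument for the half-space-preserving property (equal images versus first separation by a cut orthogonal to $v_k$) matches the paper's. The only minor imprecision is that the ternary encoding makes the intervals $I_a$ of distinct cells at the same level \emph{strictly} disjoint (this is precisely why base $3$ is used), so the boundary-endpoint ambiguity you worry about does not actually arise.
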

\begin{proof}
	Suppose that $B \in \Delta_k$, and $k$ is the smallest such number. Furthermore, assume that $A\in \Omega_k$ be the dual set corresponding to $B$. Finally, suppose that $x_0\in A,~y_0 \in B$. We first prove that
	\begin{equation}\label{eq:AB_preimages}
	\begin{split}
	A=&\hat{s}^{-1}\left(\left[\hat{s}_k(x_0),\hat{s}_k(x_0)+\frac{1}{2\cdot 3^l}\right]\right),\\
	B=&\hat{r}^{-1}\left(\left[\hat{r}_k(y_0),\hat{r}_k(y_0)+\frac{1}{2\cdot 3^l}\right]\right),
	\end{split}
	\end{equation}
	where $l$ is the length of the binary sequence assigned to the points of $A$ and $B$ at step $k$. Recall that by construction we have that
	\begin{equation*}
	\hat{s}_k(x)=\hat{r}_k(y),~\forall x\in A,~y\in B,
	\end{equation*}
	and
	\begin{equation*}
	\hat{s}_k(x)\leq \hat{s}_n(x) \leq \hat{s}_k(x)+\sum_{i=l+1}^\infty \frac{1}{3^i}=\hat{s}_k(x)+\frac{1}{2\cdot 3^l}
	\end{equation*}
	for any $n\geq k$ and $x \in A$. Therefore,
	\begin{equation*}
	\hat{s}_k(x)\leq \hat{s}(x) \leq \hat{s}_k(x)+\frac{1}{2\cdot 3^l},~\forall x \in A,
	\end{equation*}
	and hence
	\begin{equation*}
	A\subset \hat{s}^{-1}\left(\left[\hat{s}_k(x_0),\hat{s}_k(x_0)+\frac{1}{2\cdot 3^l}\right]\right).
	\end{equation*}
	Now suppose that $x\notin A$. This means that $x \in A'$ for some $A' \in \Omega_k,~A'\neq A$, and $x,x_0$ got separated at a step before $k$. Therefore, from the proof of Lemma \ref{lma:s_k_convergence} we have that
	\begin{equation*}
	|\hat{s}(x')-\hat{s}(x_0)|\geq \frac{1}{2\cdot 3^{l-1}},
	\end{equation*}
	and so
	\begin{equation*}
	\hat{s}(x') \notin \left[\hat{s}_k(x_0),\hat{s}_k(x_0)+\frac{1}{2\cdot 3^l}\right],
	\end{equation*}
	which means that
	\begin{equation*}
	A\supset \hat{s}^{-1}\left(\left[\hat{s}_k(x_0),\hat{s}_k(x_0)+\frac{1}{2\cdot 3^l}\right]\right).
	\end{equation*}
	Thus, \eqref{eq:AB_preimages} is proven. Furthermore, denote by
	\begin{equation*}
	I= \left[\hat{s}_k(x_0),\hat{s}_k(x_0)+\frac{1}{2\cdot 3^l}\right]=\left[\hat{r}_k(y_0),\hat{r}_k(y_0)+\frac{1}{2\cdot 3^l}\right].
	\end{equation*}
	Then we have shown that
	\begin{equation*}
	A=\hat{s}^{-1}(I),\quad B=\hat{r}^{-1}(I).
	\end{equation*}
	Next, we have that $x\in \hat{t}^{-1}(B)$ if and only if $x\in \mathrm{Dom}(\hat{t})$ and $\hat{t}(x)=\hat{r}^{-1}(\hat{s}(x)) \in B=\hat{r}^{-1}(I)$. Latter is equivalent to $\hat{s}(x) \in I$ or $x\in \hat{s}^{-1}(I)=A$. Hence we arrive at \eqref{eq:hat_t^-1(B)}.

	Finally, assume that $x_1\neq x_2 \in \mathrm{Dom}(\hat{t})$. If $\hat{t}(x_1)=\hat{t}(x_2)$ then we can take $v=\frac{x_2-x_1}{|x_2-x_1|}$ and obtain
	\begin{equation*}
	x_2 \cdot v - x_1 \cdot v = |x_2-x_1|>0,\quad \hat{t}(x_2) \cdot v - \hat{t}(x_1) \cdot v=0.
	\end{equation*}
	If $\hat{t}(x_1)\neq \hat{t}(x_2)$ we have that they get separated at some step. Suppose that one step before separating they both belong to some $B$, and we cut $B$ in direction $v$. Then, we get that $B$ gets partitioned into $B',B''$, where
	\begin{equation*}
	B'=B \cap \{y\cdot v  \leq \eta\},\quad B''=B \cap \{y\cdot v > \eta\},
	\end{equation*}
	for some $\eta \in \mathbb{R}$. Without loss of generality assume that
	\begin{equation*}
	\hat{t}(x_1)\in B',\quad \hat{t}(x_2)\in B''.
	\end{equation*}
	Denote by $A,A',A''$ the dual sets of $B,B',B''$, respectively. Then we have that
	\begin{equation*}
	B'=B \cap \{x\cdot v  \leq h\},\quad A''=B \cap \{x\cdot v > h\}.
	\end{equation*}
	Furthermore, from \eqref{eq:hat_t^-1(B)} we have that
	\begin{equation*}
	x_1\in A',\quad x_2\in A''.
	\end{equation*}
	Thus,
	\begin{equation*}
	x_2\cdot v - x_1 \cdot v>0,\quad \hat{t}(x_2) \cdot v - \hat{t}(x_1) \cdot v>0.
	\end{equation*}
\end{proof}

Now we are in the position to state and prove our main existence result for no-collision transportation maps.
\begin{theorem}\label{thm:half_space_pres_map}
	Suppose that
	\begin{equation*}
	\mu= f dx,\quad \nu = g dx,
	\end{equation*}
	are probability measures with compact supports such that
	\begin{equation*}
	\mu\left(\partial(\mathrm{supp}(\mu)) \right)=\nu\left(\partial(\mathrm{supp}(\nu)) \right)=0.
	\end{equation*}
	Furthermore, assume that
	\begin{equation*}
	\begin{split}
	c\leq f(x) \leq C, ~\mu ~\mbox{a.e.},\quad c\leq g(x) \leq C, ~\nu~\mbox{a.e.},
	\end{split}
	\end{equation*}
	for some constants $c,C>0$.

	Finally, suppose that $\{v_k\}_{k=1}^\infty \subset \{e_i\}_{i=1}^d$, where latter is an arbitrary basis in $\mathbb{R}^d$. Moreover, suppose that each set appearing during the partition gets partitioned in each of $\{e_i\}_{i=1}^d$ directions infinitely many times.

	Then, we have that:

	\noindent1. points inside $\mathrm{int}(\mathrm{supp}(\mu))$ and $\mathrm{int}(\mathrm{supp}(\nu))$ get separated,

	\noindent2. there exists $F_0 \in \mathcal{B}(\mathbb{R}^d)$ such that $\mu(F_0)=0$, and for every $x\in \mathrm{int}(\mathrm{supp}(\mu)) \setminus F_0$ there exits a unique $y \in \mathrm{int}(\mathrm{supp}(\nu))$ such that $\hat{s}(x)=\hat{r}(y)$,

	\noindent3. the map $\hat{t}: \mathrm{int}(\mathrm{supp}(\mu)) \setminus F_0 \to \mathrm{int}(\mathrm{supp}(\nu))$, $x\mapsto \hat{r}^{-1}(\hat{s}(x))$ is Borel measurable and $\hat{t} \sharp \mu = \nu$.
\end{theorem}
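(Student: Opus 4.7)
The plan is to prove the three conclusions in order, with essentially all the technical content concentrated in the first: showing that the nested partition cells $B_n(x) \in \Omega_n$ containing $x$ shrink to a singleton for $\mu$-a.e.\ $x \in \mathrm{int}(\mathrm{supp}(\mu))$. Since the partitioning directions are drawn from the fixed basis $\{e_i\}_{i=1}^d$, each $B_n(x)$ is a box whose side in direction $e_i$ has length $w_i(B_n(x))$, so shrinkage reduces to $w_i(B_n(x)) \to 0$ for every $i$.

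The key estimate I would establish first is a contraction inequality for cuts inside the support: if a box $A\subseteq \mathrm{supp}(\mu)$ is cut in direction $e_i$ at the $\mu$-median $h$, then both resulting half-widths lie in $[\tfrac{c}{c+C}, \tfrac{C}{c+C}]\cdot w_i(A)$. This follows by integrating the cross-sectional density $\tilde F_A(s) = \int_{A\cap\{x\cdot e_i = s\}} f\, dx_{\perp i}$, which obeys $c V_\perp \leq \tilde F_A(s) \leq C V_\perp$ with $V_\perp = \prod_{j\neq i} w_j(A)$, and equating the two half-masses to $\mu(A)/2$. In particular each cut in direction $e_i$ contracts $w_i$ by the factor $\lambda := \tfrac{C}{c+C}<1$, so the hypothesis that each cell is cut in every direction infinitely often forces $w_i(B_n(x))\to 0$ for all $i$.

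The hard part will be bootstrapping past the assumption $A\subseteq \mathrm{supp}(\mu)$, which may fail for early, large cells. My plan is to define
\[
F_0 = \bigcup_{H} H,
\]
the union of the countably many cut hyperplanes $H = \{x\cdot e_i = h\}$ used in the construction of $\hat s$. Each such $H$ is $\mu$-null by absolute continuity, so $\mu(F_0)=0$. For $x \in \mathrm{int}(\mathrm{supp}(\mu))\setminus F_0$, $x$ lies strictly inside every $B_n(x)$; combining this with $\mu(B_n(x))=2^{-n}\to 0$ and the existence of a ball $B(x,r)\subseteq\mathrm{supp}(\mu)$, the bound $\mu(B_n(x)\cap B(x,r)) \geq c\cdot \mathrm{vol}(B_n(x)\cap B(x,r))$ forces the local extent of $B_n(x)$ around $x$ to shrink. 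Once $\mathrm{diam}(B_n(x))<r$ the box sits entirely inside $\mathrm{supp}(\mu)$ and the unconditional contraction takes over, yielding $\mathrm{diam}(B_n(x))\to 0$. Conclusion~1 is then immediate from Lemma~\ref{lma:s_k_convergence}: given $x\neq x'$ in the interior with $x\cdot e_i \neq x'\cdot e_i$ for some $i$, shrinkage of $w_i$ eventually excludes $x'$ from $B_n(x)$, producing a separating cut in direction $e_i$.

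Conclusion~2 then splits naturally into uniqueness and existence. Uniqueness of the matching $y$ with $\hat s(x)=\hat r(y)$ is precisely the separation of interior $\nu$-points, obtained by running the above argument for $\nu$ (enlarging $F_0$ by the $\mu$-null preimage of the analogous $\nu$-side null set). For existence, the dual cells $A\in\Omega_k$ and $B\in\Delta_k$ satisfy $\mu(A)=\nu(B)>0$, so $B$ meets $\mathrm{int}(\mathrm{supp}(\nu))$ in a set of positive $\nu$-mass; the $\nu$-side shrinkage then forces the nested intersection of duals of the cells containing $x$ to contain a unique point $y \in \mathrm{int}(\mathrm{supp}(\nu))$, which satisfies $\hat r(y)=\hat s(x)$ by the definition of duality. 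Conclusion~3 is then bookkeeping: the identity $\hat t^{-1}(B) = \mathrm{Dom}(\hat t)\cap A$ from Theorem~\ref{thm:hat(t)} applied to dual partition cells gives Borel measurability and, via $\mu(\hat t^{-1}(B)) = \mu(A) = \nu(B)$ (using $\mu(\mathrm{Dom}(\hat t)^c)=0$), the push-forward identity on the algebra generated by $\bigcup_k \Delta_k$. Since cell diameters tend to zero and $\nu(\partial(\mathrm{supp}(\nu)))=0$, this algebra generates the Borel $\sigma$-algebra on $\mathrm{supp}(\nu)$, and a standard monotone class argument yields $\hat t \sharp \mu = \nu$.
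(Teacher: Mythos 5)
Your overall architecture (cell shrinkage $\Rightarrow$ separation $\Rightarrow$ duality $\Rightarrow$ push-forward on the algebra generated by partition cells) matches the paper's, and your contraction lemma for boxes contained in $\mathrm{supp}(\mu)$ is correct. But the bootstrap in item 1 does not go through as stated, and this is exactly where the paper's proof does its real work. From $\mu(B_n(x))=2^{-n}\to 0$ and $f\geq c$ you only get $\mathcal{L}^d(B_n(x)\cap B(x,r))\to 0$; since the widths $w_i(B_n(x))$ are non-increasing, this yields that \emph{at least one} width tends to $0$, not that all of them do. A cell can remain a long thin slab through $x$ (say $(x_1-\epsilon_n,\,x_1+\epsilon_n]\times(-M',M']$) forever without violating the volume bound; then $\mathrm{diam}(B_n(x))<r$ never occurs and your unconditional contraction never kicks in. Note also that a median cut of such a slab in the long direction need not contract $w_i(B_n(x))$ by a fixed factor, because the overhang outside $\mathrm{supp}(\mu)$ carries no mass. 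The missing idea (the paper's) is to pass to the limit rectangle $R=\bigcap_n B_n(x)$, split coordinates into collapsing and non-collapsing ones, and observe that a cut in a non-collapsing direction removes a slab whose width in that direction is squeezed between $\beta_i^k$ and $\beta_i$ and hence vanishes; comparing the volume of the removed slab with that of the kept piece (the collapsing widths cancel in the ratio, the non-collapsing ones are bounded using compactness of the support and a cube around $x$ inside it) contradicts the lower bound $c/C$ on this ratio forced by the equal-mass splitting.

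There is a second gap in item 2. Your existence argument asserts that ``the $\nu$-side shrinkage forces the nested intersection of duals \dots\ to contain a unique point $y\in\mathrm{int}(\mathrm{supp}(\nu))$,'' but the shrinkage you prove applies to cells containing a fixed interior point of $\mathrm{supp}(\nu)$, and here you do not yet know the dual cells $B_k$ share such a point --- that is precisely the claim. In fact $\bigcap_k\mathrm{cl}(B_k)\cap\mathrm{supp}(\nu)$ is nonempty by compactness but may consist only of boundary points of $\mathrm{supp}(\nu)$ (and the half-open cells themselves may have empty intersection), in which case no $y$ with $\hat r(y)=\hat s(x)$ exists. The exceptional set of such $x$ is not contained in your $F_0$ (a union of cut hyperplanes), nor can it be described as a ``preimage'' before the map is defined. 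The paper instead sets $F_0=\bigcap_k F_k$, where $F_k$ is the union of the $\mu$-cells dual to the $\nu$-cells meeting $\partial(\mathrm{supp}(\nu))$, and shows $\mu(F_0)=\lim_k\nu(H_k)=0$ using $\nu(\partial(\mathrm{supp}(\nu)))=0$ and item 1; some such construction is indispensable. Your conclusion 3 (measurability and the push-forward identity via $\hat t^{-1}(B)=\mathrm{Dom}(\hat t)\cap A$) is fine and matches the paper.
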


Next, we prove a regularity result for these maps.
\begin{theorem}\label{thm:hat(t)_cty}
	Under the assumptions of Theorem \ref{thm:half_space_pres_map} one has that $\hat{t}$ is a.e. continuous.
\end{theorem}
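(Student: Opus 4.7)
My plan is to exhibit a $\mu$-null exceptional set outside of which $\hat{t}$ is continuous. The natural candidate is
\[
N = F_0 \cup \mathcal{H}, \qquad \mathcal{H} = \bigcup_k \{x \in \mathbb{R}^d : x\cdot v_k = h_k\},
\]
the union of the set $F_0$ from Theorem~\ref{thm:half_space_pres_map} with the countable collection of all coordinate hyperplanes used during the $\mu$-side partition. Because $\mathcal{H}$ is a countable union of affine $(d-1)$-planes, it has Lebesgue measure zero; combined with $\mu \ll dx$ and $\mu(F_0)=0$, this gives $\mu(N)=0$ at once.

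Next I would establish continuity at each $x \in \mathrm{Dom}(\hat{t}) \setminus \mathcal{H}$. Fix such an $x$, and for each $k$ let $A_k \in \Omega_k$ be the cell containing $x$ and $B_k \in \Delta_k$ its dual. Since $x$ lies on none of the finitely many coordinate hyperplanes defining $A_k$, there is $\delta_k > 0$ with $B(x,\delta_k) \subset A_k$. The preimage identity \eqref{eq:hat_t^-1(B)} from Theorem~\ref{thm:hat(t)} then yields
\[
\hat{t}\bigl(B(x,\delta_k)\cap\mathrm{Dom}(\hat{t})\bigr) \subset B_k,
\]
so $|\hat{t}(y)-\hat{t}(x)| \le \mathrm{diam}(B_k \cap \mathrm{supp}(\nu))$ for every $y \in B(x,\delta_k)\cap\mathrm{Dom}(\hat{t})$. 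Continuity at $x$ thus reduces to the geometric fact $\mathrm{diam}(B_k \cap \mathrm{supp}(\nu)) \to 0$.

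The hard step is exactly this diameter collapse, which is also the content of the separation clause of Theorem~\ref{thm:half_space_pres_map}, and I would prove it by a width-contraction argument. Since all partitioning directions are coordinate vectors, $B_k\cap\mathrm{supp}(\nu)$ lies inside an axis-aligned bounding box $\prod_{i=1}^d I_i^{(k)}$, and the density bound $c\le g\le C$ forces any median cut in direction $e_i$ to produce two subcells each of $e_i$-width at least $c|I_i^{(k)}|/(2C)$, since a thinner slab could not carry half of the cell's mass. Consequently each half has $e_i$-width at most $\alpha|I_i^{(k)}|$ with $\alpha = 1 - c/(2C) \in [\tfrac12,1)$, a contraction ratio independent of the cell. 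Iterating, $|I_i^{(k)}| \le \alpha^{n_i^{(k)}} |I_i^{(0)}|$, where $n_i^{(k)}$ counts cuts in direction $e_i$ along the branch reaching $B_k$; by duality of the $\mu$- and $\nu$-partitions these counts coincide with those on the $\mu$-side, and the standing hypothesis that every appearing cell is cut in every $e_i$ infinitely often makes $n_i^{(k)}\to\infty$. All $d$ widths therefore collapse, and hence so does $\mathrm{diam}(B_k\cap\mathrm{supp}(\nu))$. The same reasoning applied on the $\mu$-side justifies the separation already used when invoking Theorem~\ref{thm:hat(t)}. Assembling these ingredients yields continuity of $\hat{t}$ on $\mathrm{Dom}(\hat{t})\setminus\mathcal{H}$, hence a.e.\ continuity.
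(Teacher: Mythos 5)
Your overall architecture matches the paper's: the exceptional set is the (Lebesgue-null) union of all partitioning hyperplanes together with $F_0$; for $x$ off this set one has $x\in\mathrm{int}(A_k)$ for every $k$, so a small ball around $x$ maps into the dual cell $B_k$ by \eqref{eq:hat_t^-1(B)}, and everything reduces to showing that the $B_k$ collapse onto the single point $\hat{t}(x)$. Up to that point your argument is sound.

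The gap is in the width-contraction lemma you use to prove the collapse. The claim that a median cut in direction $e_i$ produces two subcells each of $e_i$-width at least $\tfrac{c}{2C}\lvert I_i^{(k)}\rvert$ is false: the mass of a slab of width $w$ is bounded above by $C\,w\prod_{j\neq i}\lvert I_j^{(k)}\rvert$, while the cell's total mass is only bounded below by $c\,\mathcal{L}^d\bigl(B_k\cap\mathrm{supp}(\nu)\bigr)$, and the latter can be far smaller than the volume $\prod_j\lvert I_j^{(k)}\rvert$ of the bounding box. Your inequality implicitly assumes the support fills its bounding box, which fails for perfectly admissible targets (an L-shaped or annular support with uniform density, say): there the median hyperplane can sit arbitrarily close to one face of the bounding box, so one of the two halves retains nearly the full width and no contraction ratio $\alpha<1$ uniform over cells exists. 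This is precisely why the paper's proof of Theorem \ref{thm:half_space_pres_map} is delicate: the collapse of $\bigcap_k B_k$ to a singleton is obtained not by a geometric contraction rate but by the measure-ratio estimate \eqref{eq:c/C_estimate} combined with the hypothesis that the limit point lies in $\mathrm{int}(\mathrm{supp}(\nu))$ (which supplies a full cube inside the support to bound $\mathcal{L}^d(B_{k+1}\cap\mathrm{supp}(\nu))$ from below), and it only holds after discarding the null set $F_0$ of points whose dual rectangles accumulate on $\partial(\mathrm{supp}(\nu))$. The repair is simply to drop your contraction argument and cite what the proof of Theorem \ref{thm:half_space_pres_map} already establishes: for $x\in\mathrm{Dom}(\hat{t})$ one has $\bigcap_k \mathrm{cl}(B_k)=\{\hat{t}(x)\}$, and since the $\mathrm{cl}(B_k)\cap\mathrm{supp}(\nu)$ are nested nonempty compacta with singleton intersection, their diameters tend to zero; your continuity estimate then closes the proof exactly as in the paper.
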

We postpone the proofs of these related and rather technical theorems to the Appendix.

\section{Further properties of half-space-preserving transportation maps}\label{sec:hsp_maps_further}

Here, we discuss further critical properties of half-space-preserving maps constructed in Theorem \ref{thm:half_space_pres_map}.
\begin{proposition}[Synthesis]\label{prp:synthesis}
	Suppose that $\mu,\nu \in \mathcal{P}(\mathbb{R}^d)$, $\{v_k\}_{k=1}^\infty$ are some partitioning directions, and $k \in \mathbb{N}$. Furthermore, assume that for all dual pairs $(A,B) \in \Omega_k\times \Delta_k$ there exists a Borel measurable half-space-preserving map $t_{A,B}:A\setminus F_A \to B$ such that
	\begin{equation*}
	t_{A,B} \sharp (\mu|_{A\setminus F_A})=\nu_B,
	\end{equation*}
	where $F_A$ are some Borel sets such that $\mu(F_A)=0$. Then the map $t:\mathbb{R}^d \setminus \bigcup_{A \in \Omega_k} F_A \to \mathbb{R}^d$ given by
	\begin{equation*}
	t=\sum_{A\in \Omega_k} \mathbf{1}_A t_{A,B},
	\end{equation*}
	is a half-space-preserving map such that
	\begin{equation*}
	t\sharp \mu =\nu.
	\end{equation*}
\end{proposition}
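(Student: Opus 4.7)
The plan is to deduce both properties of $t$ from the recursive symmetry of the BSP construction of $\Omega_k$ and $\Delta_k$, in two stages: first the push-forward identity together with Borel measurability (which is mostly book-keeping), then the half-space-preserving property (which requires matching the cut direction across source and target).

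First I would record the structural facts used throughout. Since $k$ is fixed and the construction starts from $\Omega_0=\{\mathbb{R}^d\}$, $\Omega_k$ is a finite disjoint partition of $\mathbb{R}^d$, and likewise for $\Delta_k$; duality provides a bijection $A\leftrightarrow B$ with $\mu(A)=\nu(B)$ for dual pairs. Because each $t_{A,B}$ is Borel and the $A\setminus F_A$ are disjoint Borel sets, the piecewise-defined $t$ is Borel, and $\mu\bigl(\bigcup_A F_A\bigr)=0$ by finite additivity. For the push-forward, I would compute, for a Borel $E\subset\mathbb{R}^d$,
\begin{equation*}
\mu\bigl(t^{-1}(E)\bigr)=\sum_{A\in\Omega_k}\mu\bigl((t_{A,B})^{-1}(E\cap B)\bigr)=\sum_{B\in\Delta_k}\nu_B(E\cap B)=\nu(E),
\end{equation*}
using $t(A\setminus F_A)\subset B$, the hypothesis $t_{A,B}\sharp(\mu|_{A\setminus F_A})=\nu_B$, and the fact that the $B$'s partition $\mathbb{R}^d$.

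For the half-space-preserving property I split on whether $x_1\neq x_2$ lie in the same $A\in\Omega_k$. The same-cell case reduces immediately to the HSP hypothesis on $t_{A,B}$. If $x_1\in A_1$ and $x_2\in A_2$ with $A_1\neq A_2$, dual to $B_1,B_2$, I trace the BSP tree back to the smallest step $j\leq k$ at which the branches containing $A_1$ and $A_2$ diverge. By the construction in Theorem \ref{thm:half_space_pres_map}, that step splits a common ancestor $\tilde A\supset A_1\cup A_2$ by a hyperplane orthogonal to $v_j$ with some offset $h$, and by the symmetric choice of partition directions on the target side it also splits the dual common ancestor $\tilde B\supset B_1\cup B_2$ by a hyperplane orthogonal to the \emph{same} $v_j$ with some offset $h'$; duality pairs the $\{x\cdot v_j\leq h\}$ side with the $\{y\cdot v_j\leq h'\}$ side. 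Thus, up to relabeling, $x_1\cdot v_j\leq h<x_2\cdot v_j$ and $t(x_1)\cdot v_j\leq h'<t(x_2)\cdot v_j$, which gives $(x_2-x_1)\cdot v_j>0$ and $(t(x_2)-t(x_1))\cdot v_j>0$. So $v=v_j$ witnesses the HSP condition with both inequalities strict.

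The only substantive point, and therefore the main obstacle, is this inter-cell argument: identifying that the same partition direction $v_j$ works for both the source and target pairs. This is a direct consequence of the key feature of the BSP construction—source and target are cut in identical directions at every step—which is what synchronizes the two trees and keeps duality coherent with the geometric splittings. Everything else (measurability, the push-forward, and the same-cell case) is routine.
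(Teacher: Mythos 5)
Your argument is correct. Note that the paper itself states Proposition \ref{prp:synthesis} without proof, so there is nothing to diverge from; your proof is the natural one, and its key step --- tracing $x_1,x_2$ in distinct cells back to the first divergence of their BSP branches and using that the dual ancestor in the target is cut by a hyperplane orthogonal to the \emph{same} direction $v_j$, with the $\leq$ sides paired by duality --- is exactly the mechanism already used at the end of the paper's proof of Theorem \ref{thm:hat(t)}. The measurability and push-forward computations are routine and handled correctly.
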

This previous proposition asserts that the calculation of half-space-preserving transportation maps on partition sets are mutually independent. Thus, the global-map calculation is decentralized and stable under local perturbations.
\begin{proposition}[Transitivity]\label{prp:transitivity}
	Suppose that $\mu_1,\mu_2,\cdots,\mu_n \in \mathcal{P}(\mathbb{R}^d)$, and $\{v_k\}_{k=1}^\infty$ are some partitioning directions. Furthermore, assume that
	\begin{equation*}
	t_k:\mathbb{R}^d \setminus F_k \to \mathbb{R}^d,~1\leq k \leq n-1,
	\end{equation*}
	are half-space-preserving maps generated by $\{v_k\}_{k=1}^\infty$ such that
	\begin{equation*}
	\mu_k(F_k)=0,\quad t_k \sharp \mu_k =\mu_{k+1},~1\leq k \leq n-1.
	\end{equation*}
	Then, the mapping
	\begin{equation*}
	t=t_{n-1}\circ t_{n-2}\circ \cdots \circ t_1 : \mathbb{R}^d \setminus F_1 \to \mathbb{R}^d,
	\end{equation*}
	is a half-space-preserving mapping corresponding to directions $\{v_k\}_{k=1}^\infty$, and
	\begin{equation*}
	t\sharp \mu_1 = \mu_n.
	\end{equation*}
\end{proposition}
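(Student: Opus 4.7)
The plan is to verify three things separately: the push-forward identity $t\sharp \mu_1 = \mu_n$, the half-space-preserving property of the composition, and the fact that $t$ is defined $\mu_1$-almost everywhere. The first is essentially formal, while the second is the substantive content that relies on the shared direction sequence $\{v_k\}$.

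For the domain and push-forward, I would first enlarge $F_1$ to
\begin{equation*}
F_1 \cup t_1^{-1}(F_2) \cup (t_2\circ t_1)^{-1}(F_3) \cup \cdots \cup (t_{n-2}\circ \cdots \circ t_1)^{-1}(F_{n-1}).
\end{equation*}
Each pullback $t_j^{-1}(F_{j+1})$ is $\mu_j$-null because $t_j\sharp \mu_j = \mu_{j+1}$ and $\mu_{j+1}(F_{j+1})=0$; iterating the same identity backwards through $t_{j-1},\ldots,t_1$ shows the enlarged exceptional set remains $\mu_1$-null, so I may assume $F_1$ already contains these pullbacks. On the resulting domain $t$ is Borel measurable as a composition of Borel maps, and the push-forward identity follows from iterating $(f\circ g)\sharp \rho = f\sharp (g\sharp \rho)$, yielding $t\sharp \mu_1 = t_{n-1}\sharp (\cdots(t_1\sharp \mu_1)) = \mu_n$.

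The half-space-preserving property is most efficiently handled by identifying $t$ with the map generated by $\{v_k\}$ between $\mu_1$ and $\mu_n$. Let $\hat{s}^{(j)}$ denote the ternary-expansion function \eqref{eq:hat_s} attached to $\mu_j$ and the common direction sequence. By the defining relation \eqref{eq:hat_t^-1(B)} in Theorem \ref{thm:hat(t)}, each $t_j$ satisfies $\hat{s}^{(j+1)} \circ t_j = \hat{s}^{(j)}$ wherever both sides are defined. Composing telescopically yields $\hat{s}^{(n)} \circ t = \hat{s}^{(1)}$, i.e.\ $t = (\hat{s}^{(n)})^{-1} \circ \hat{s}^{(1)}$. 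The last assertion of Theorem \ref{thm:hat(t)} then delivers the half-space-preserving property, provided points in the image are eventually separated by $\{v_k\}$, which holds under the standing hypothesis that each $t_k$ is a well-defined map generated by $\{v_k\}$.

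The main subtle point, and the reason the conclusion truly requires all $t_k$'s to share the same partitioning directions, is that separation of $x_1,x_2$ at step $k$ in the $\mu_1$-partition must persist, at the \emph{same} step $k$, through every subsequent partition. If one prefers to avoid the ternary encoding, the same observation can be made directly: for $x_1 \neq x_2$ in the common domain, let $k$ be the first step at which they lie in distinct dual cells of the $\mu_1$-partition, so that they sit on opposite sides of a hyperplane with normal $v_k$; successive application of \eqref{eq:hat_t^-1(B)} to $t_1,\ldots,t_{n-1}$ shows that the iterated images lie in dual cells of each $\mu_j$-partition which are likewise separated at step $k$ by a hyperplane with normal $v_k$. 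The direction $v_k$ then witnesses the half-space-preserving condition for $t$.
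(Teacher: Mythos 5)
The paper states Proposition~\ref{prp:transitivity} without proof (the Appendix only proves Theorems~\ref{thm:half_space_pres_map} and~\ref{thm:hat(t)_cty}), so there is no written argument to compare against; your proposal supplies the missing details and is, as far as I can check, correct. The two halves of your argument are the right ones: the push-forward identity is purely formal once you enlarge the exceptional set to $F_1 \cup t_1^{-1}(F_2) \cup \cdots$, and you are right that this enlargement is genuinely needed --- as literally stated, $t_{n-1}\circ\cdots\circ t_1$ need not be defined on all of $\mathbb{R}^d\setminus F_1$, only off a $\mu_1$-null superset of it. For the half-space-preserving claim, the telescoping identity $\hat{s}^{(j+1)}\circ t_j = \hat{s}^{(j)}$ is exactly what \eqref{eq:hat_t^-1(B)} encodes (membership in a cell $A$ is equivalent to membership of the image in the dual cell $B$, for every level $k$), and composing it identifies $t$ with the map $(\hat{s}^{(n)})^{-1}\circ\hat{s}^{(1)}$ generated directly between $\mu_1$ and $\mu_n$, at which point the last assertion of Theorem~\ref{thm:hat(t)} finishes the job. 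Your alternative ``direct'' argument in the final paragraph is also sound for pairs $x_1\neq x_2$ that eventually separate (both inequalities for $v_k$ are then strict, since the cells are of the form $\{x\cdot v_k\le h\}$ and $\{x\cdot v_k>h\}$); just note that it silently omits the case of a pair that never separates, where one must fall back on $t(x_1)=t(x_2)$ and the choice $v=(x_2-x_1)/|x_2-x_1|$ as in the proof of Theorem~\ref{thm:hat(t)} --- your invocation of that theorem already covers this, so the omission is cosmetic. The one point worth flagging is that the proposition is phrased for general $\mu_k\in\mathcal{P}(\mathbb{R}^d)$, whereas your proof leans on the machinery of Theorem~\ref{thm:hat(t)} (and implicitly on eventual separation of points in the image); this is harmless under the reading that ``generated by $\{v_k\}$'' presupposes the hypotheses under which that construction is valid, but it is worth stating that reading explicitly.
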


\section{Numerical Results}\label{sec:numerics}



Our algorithm proceeds by successively partitioning the source and target measures. These partitions generate BSP tree decompositions of these measures: see Figures~\ref{fig:cut-grid} and \ref{fig:cut-gauss} for uniform and Gaussian measures illustrated in Figure~\ref{fig:ellipse}.

The algorithm has a run-time of $O(n \log n)$ \cite{blum'73,musser'97}: the points are divided into halves or cells, along a horizontal cut, and then each half is divided into vertical halves. The algorithm is applied recursively until each cell has one point in it or until a stopping criterion is reached. Thus, the algorithm does not involve explicit optimization.

We compare transportation costs of maps generated by our algorithm with optimal ones for various transportation metrics.  As a reference point, we also compute transportation costs of lexicographical-order-preserving or Knothe-Rosenblatt maps \cite{knothe'57,rosen'52}: these are fair candidates with the same run-time and without any optimization of the transportation cost.

\begin{figure}[ht]
	\centering
    \begin{tabular}[b]{c}
        \subfloat[Quasi-equally spaced source points within an ellipse]{
            \includegraphics[width=2.75 in]{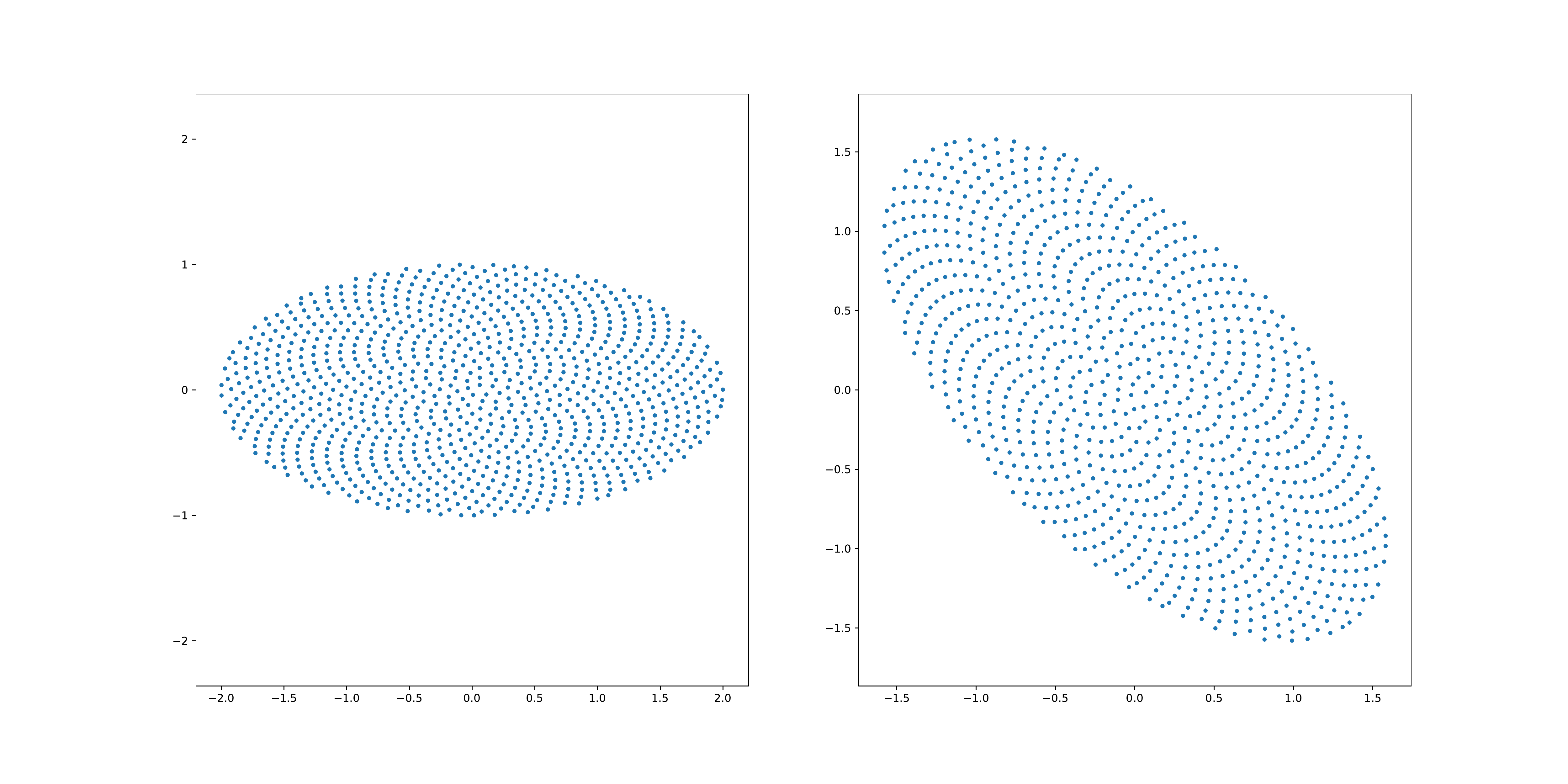}
        }
        \\
        \subfloat[Equally spaced source points within a grid]{
            \includegraphics[width=2.75 in]{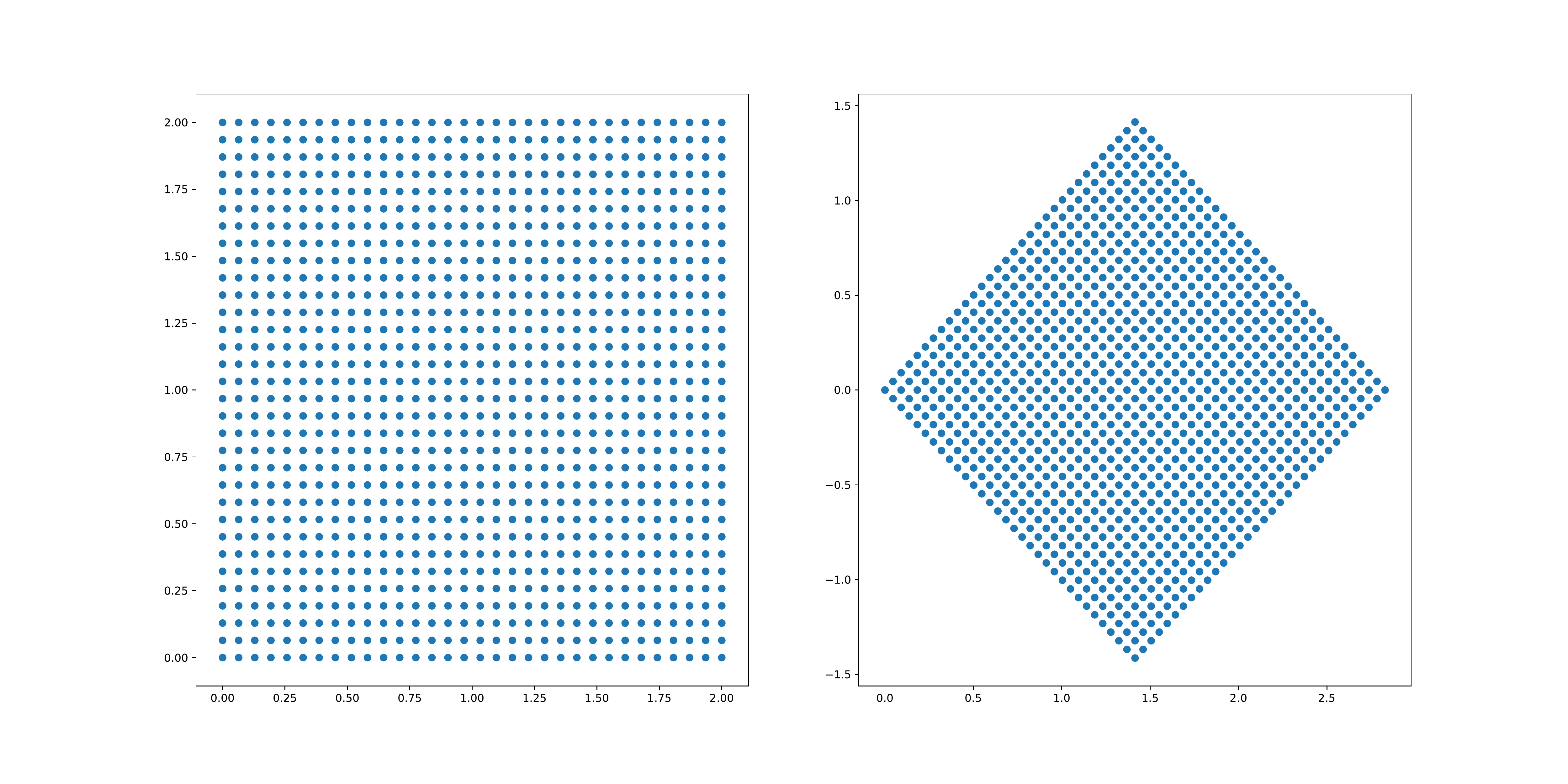}
        }
        \\
        \subfloat[Normally distributed]{
            \includegraphics[width=2.75 in]{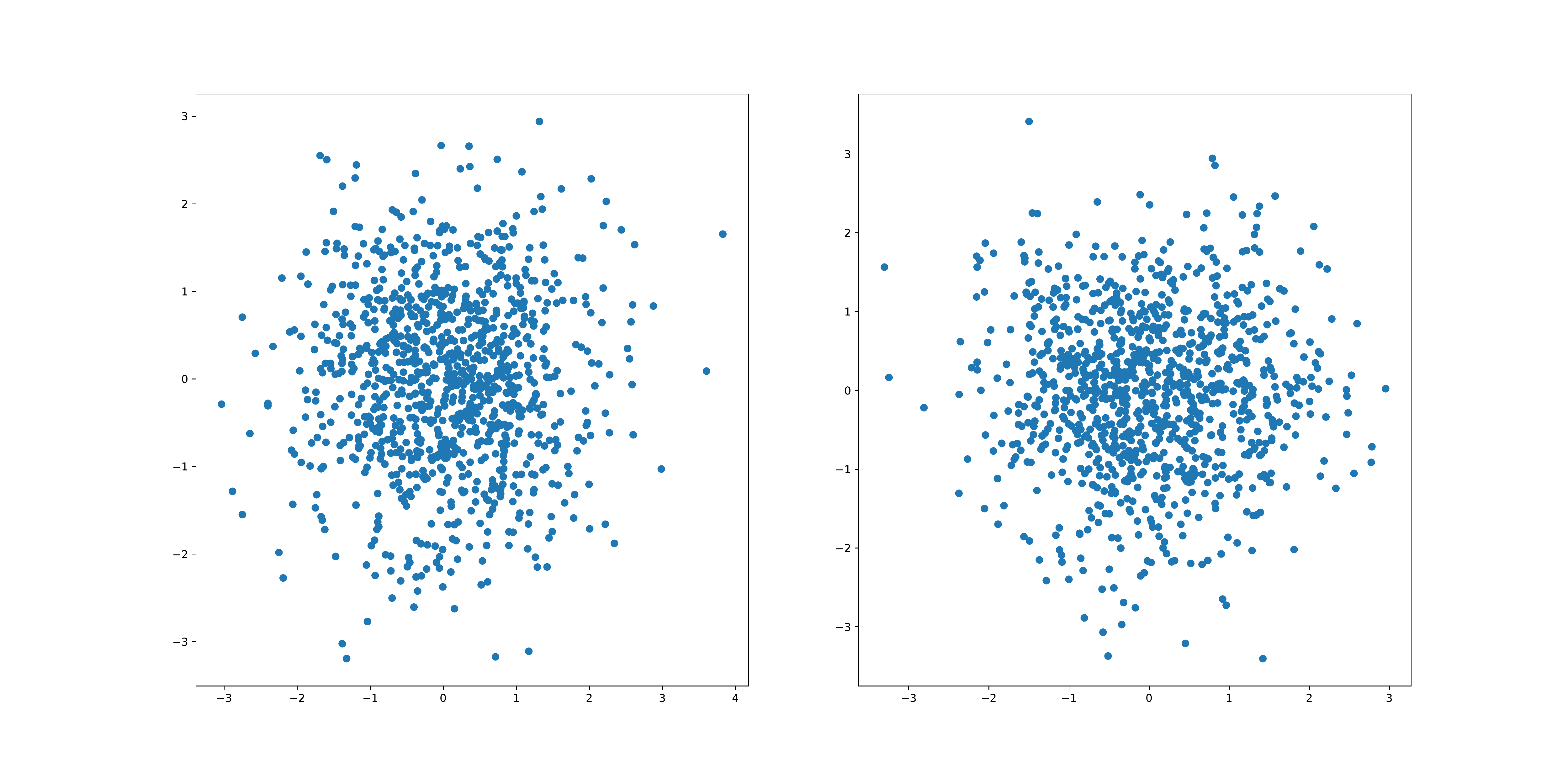}
        }
    \end{tabular}
	\caption{Visualization of computational examples.}
	\label{fig:ellipse}
\end{figure}

\begin{figure}[ht]
    \centering
    \hspace{-.5in}
    \includegraphics[width=2.75 in]{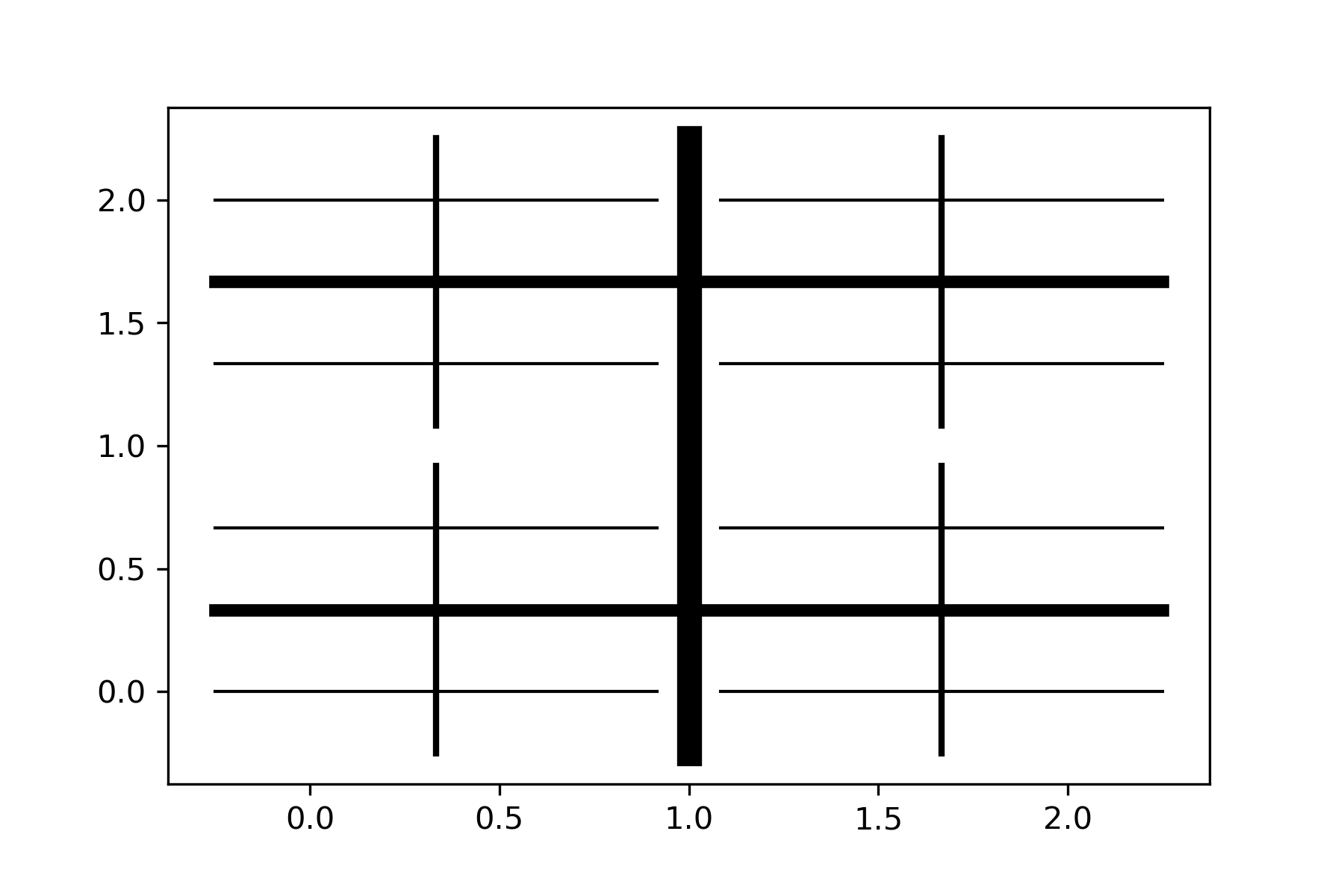}
    \caption{Bisection on equally spaced points within a grid (source)}
    \label{fig:cut-grid}
\end{figure}

\begin{figure}[ht]
    \centering
    \hspace{-.5in}
    \includegraphics[width=2.75 in]{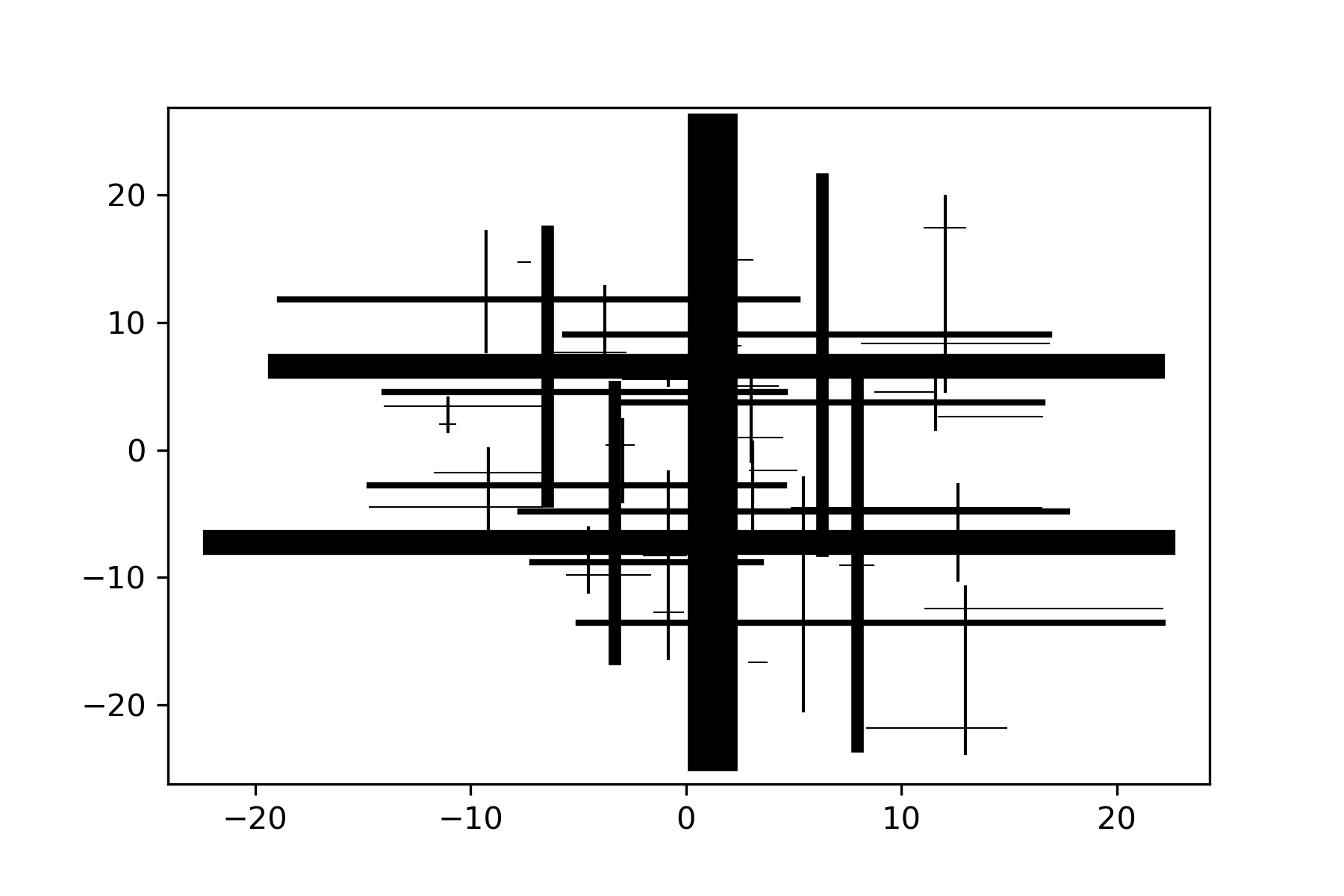}
    \caption{Bisection on points sampled from $\mathcal{N}(0,1)$ (target)}
    \label{fig:cut-gauss}
\end{figure}

\begin{figure}[ht]
    \centering
    \hspace{-.5in}
    \includegraphics[width=2.75 in]{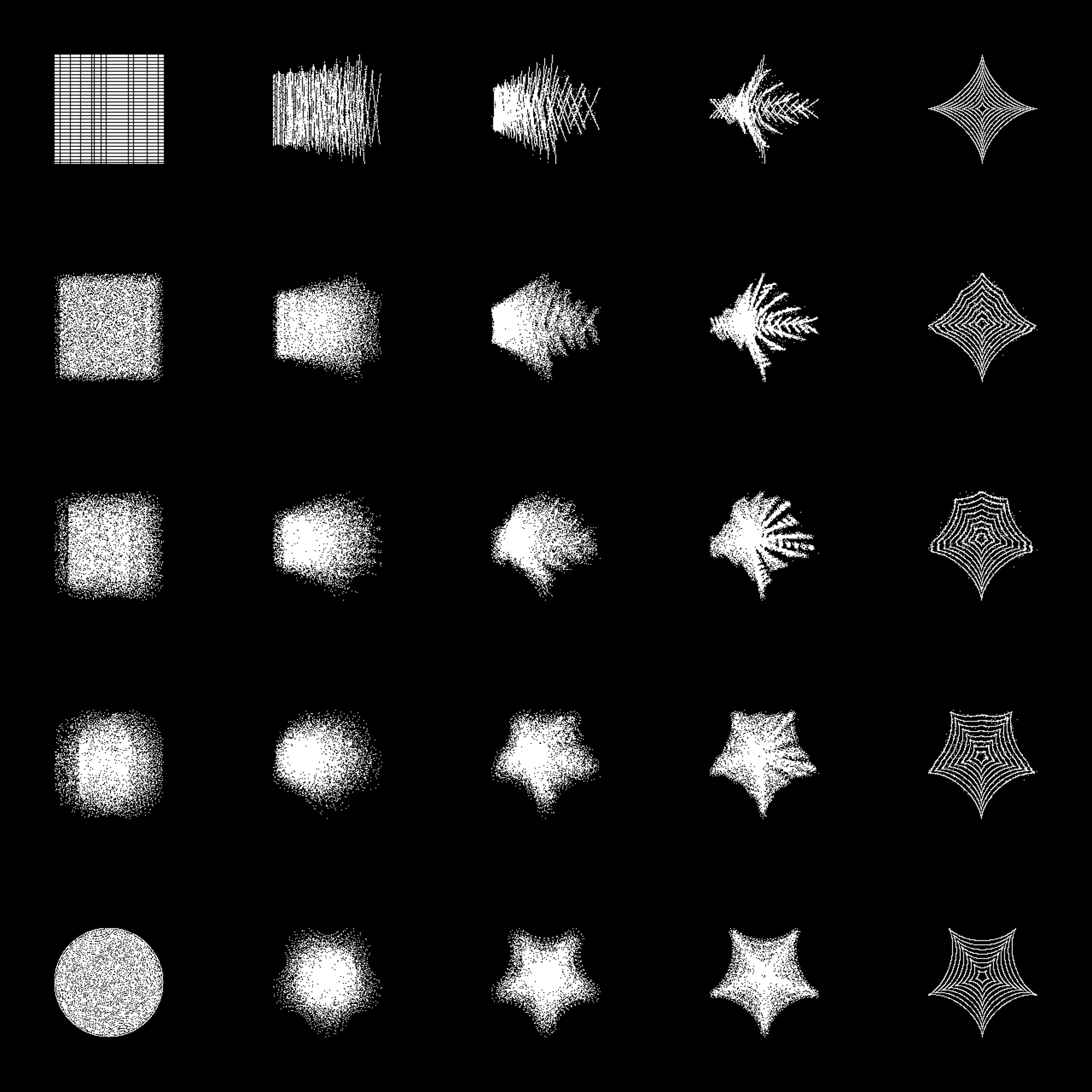}
    \caption{Barycenters of four starting shapes (shown in corners of image) at various weights}
    \label{fig:bary}
\end{figure}




We conducted our experiments on an Intel i7 2.9 GHz CPU with 8 GB of DDR3 RAM for $n = 2^{2N},~N = 2, 3, \ldots, 6$ number of points in the source and target measures.

To compare transportation costs generated by our algorithm, we implemented
\begin{itemize}
\item[(i)] the Hungarian algorithm for the assignment problem that has a run-time of order $n^3$,
\item[(ii)] the linear programming (LP) algorithm for the Kantorovich relaxation of the optimal transportation problem,
\item[(iii)] our algorithm with horizontal-vertical partitioning (HV),
\item[(iv)] the Sinkhorn (SH) algorithm \cite{cuturi2013sinkhorn} for the entropic regularization (ER) of the optimal transportation problem as implemented in \cite{flamary2017pot},
\item[(v)] the lexicographical sorting algorithm (LEX) to generate the Knothe-Rosenblatt map,
\end{itemize}
The Hungarian and LP algorithms yielded the same cost with former breaking down for $N=3$, so we report only the LP cost. In our experiments, we consider transportation costs of the form $c(x,y)=\|x-y\|^q_p$ for $p=1,2,\infty$ and $q=1,2$.

In the first set of examples, we consider empirical measures of quasi-uniformly spaced points in an ellipse, uniformly spaced points on a grid, and normally distributed points with zero mean and an identity covariance matrix, Figure \ref{fig:ellipse}, as source measures. As target measures, we consider their rotated versions by $45^\circ$ counter-clockwise.

%
%
%
%
%
%
%

\begin{table}
	\begin{center}
		\resizebox{0.95\textwidth}{!}{%
			\begin{tabular}{cccccc}
				\toprule
				& & \multicolumn{4}{c}{$n$} \\
				\cmidrule{3-6}
				Cost & Method & $64$ & $256$ & $1024$ & $4096$ \\
				\midrule
				\begin{tabular}{c}
					\multirow{4}{*}{$\|\cdot\|_2^2$} \\
					\phantom{$\|\cdot\|_2^2$} \\
					\phantom{$\|\cdot\|_2^2$} \\
					\phantom{$\|\cdot\|_2^2$}
				\end{tabular}
				&
				\begin{tabular}{cr}
					OT & \\
					HV & (ratio)\\
					SH & (ratio)\\
					LEX & (ratio)
				\end{tabular}
				&
				\begin{tabular}{cr}
                    $0.32$ \\ $0.38$  & ($1.18 \times$) \\ $0.34$ & ($1.07 \times$) \\ $0.85$ & ($2.66 \times$)
				\end{tabular}
				&
				\begin{tabular}{cr}
                    $0.26$ \\ $0.30$  & ($1.16 \times$) \\ $0.30$ & ($1.15 \times$) \\ $0.78$ & ($3.02 \times$)
				\end{tabular}
				&
				\begin{tabular}{cr}
                    $0.24$ \\ $0.28$  & ($1.15 \times$) \\ $0.29$ & ($1.18 \times$) \\ $0.76$ & ($3.09 \times$)
				\end{tabular}
				&
				\begin{tabular}{cr}
					$0.25$ \\ $0.27$  & ($1.08 \times$) \\ $0.29$ & ($1.12 \times$) \\ $0.74$ & ($2.92 \times$)
				\end{tabular}
				\\
				\midrule
				%
				\begin{tabular}{c}
					\multirow{4}{*}{$\|\cdot\|_2$} \\
					\phantom{$\|\cdot\|_2$} \\
					\phantom{$\|\cdot\|_2$} \\
					\phantom{$\|\cdot\|_2$}
				\end{tabular}
				&
				\begin{tabular}{cr}
					OT & \\
					HV & (ratio)\\
					SH & (ratio)\\
					LEX & (ratio)
				\end{tabular}
				&
				\begin{tabular}{cr}
                    $0.49$ \\ $0.58$  & ($1.17 \times$) \\ $0.52$ & ($1.05 \times$) \\ $0.85$ & ($1.72 \times$)
				\end{tabular}
				&
				\begin{tabular}{cr}
                    $0.44$ \\ $0.51$  & ($1.17 \times$) \\ $0.48$ & ($1.08 \times$) \\ $0.81$ & ($1.84 \times$)
				\end{tabular}
				&
				\begin{tabular}{cr}
                    $0.43$ \\ $0.50$  & ($1.18 \times$) \\ $0.47$ & ($1.10 \times$) \\ $0.80$ & ($1.88 \times$)
				\end{tabular}
				&
				\begin{tabular}{cr}
					$0.42$ \\ $0.50$  & ($1.18 \times$) \\ $0.46$ & ($1.11 \times$) \\ $0.79$ & ($1.89 \times$)
				\end{tabular}
				\\
				\midrule
				%
				\begin{tabular}{c}
					\multirow{4}{*}{$\|\cdot\|_1^2$} \\
					\phantom{$\|\cdot\|_1^2$} \\
					\phantom{$\|\cdot\|_1^2$} \\
					\phantom{$\|\cdot\|_1^2$}
				\end{tabular}
				&
				\begin{tabular}{cr}
					OT & \\
					HV & (ratio)\\
					SH & (ratio)\\
					LEX & (ratio)\\
				\end{tabular}
				&
				\begin{tabular}{cr}
                    $0.49$ \\ $0.64$  & ($1.30 \times$) \\ $0.51$ & ($1.04 \times$) \\ $1.40$ & ($2.86 \times$)
				\end{tabular}
				&
				\begin{tabular}{cr}
                    $0.39$ \\ $0.48$  & ($1.23 \times$) \\ $0.42$ & ($1.09 \times$) \\ $1.28$ & ($3.33 \times$)
				\end{tabular}
				&
				\begin{tabular}{cr}
                    $0.36$ \\ $0.43$  & ($1.20 \times$) \\ $0.40$ & ($1.11 \times$) \\ $1.24$ & ($3.45 \times$)
				\end{tabular}
				&
				\begin{tabular}{cr}
					$0.38$ \\ $0.42$  & ($1.11 \times$) \\ $0.40$ & ($1.05 \times$) \\ $1.22$ & ($3.24 \times$)
				\end{tabular}
				\\
				\midrule
				%
				\begin{tabular}{c}
					\multirow{4}{*}{$\|\cdot\|_\infty^2$} \\
					\phantom{$\|\cdot\|_\infty^2$} \\
					\phantom{$\|\cdot\|_\infty^2$} \\
					\phantom{$\|\cdot\|_\infty^2$}
				\end{tabular}
				&
				\begin{tabular}{cr}
					OT & \\
					HV & (ratio)\\
					SH & (ratio)\\
					LEX & (ratio)\\
				\end{tabular}
				&
				\begin{tabular}{cr}
                    $0.23$ \\ $0.30$  & ($1.28 \times$) \\ $0.26$ & ($1.11 \times$) \\ $0.70$ & ($2.97 \times$)
				\end{tabular}
				&
				\begin{tabular}{cr}
                    $0.19$ \\ $0.25$  & ($1.31 \times$) \\ $0.23$ & ($1.20 \times$) \\ $0.64$ & ($3.34 \times$)
				\end{tabular}
				&
				\begin{tabular}{cr}
                    $0.18$ \\ $0.24$  & ($1.32 \times$) \\ $0.22$ & ($1.24 \times$) \\ $0.62$ & ($3.45 \times$)
				\end{tabular}
				&
				\begin{tabular}{cr}
					$0.18$ \\ $0.23$  & ($1.26 \times$) \\ $0.22$ & ($1.20 \times$) \\ $0.61$ & ($3.31 \times$)
				\end{tabular}
				\\
				\bottomrule
		\end{tabular}}
	\end{center}
	\caption{Average costs of set of quasi-equally spaced points within an ellipse to the same set of points rotated $45^\circ$ counter-clockwise measured in various cost functions}
	\label{tab:ellipse2rotellipse}
\end{table}

%
%
%
%
%
%
%

\begin{table}
	\begin{center}
		\resizebox{0.95\textwidth}{!}{%
			\begin{tabular}{cccccc}
				\toprule
				& & \multicolumn{4}{c}{$n$} \\
				\cmidrule{3-6}
				Cost & Method & $64$ & $256$ & $1024$ & $4096$ \\
				\midrule
				\begin{tabular}{c}
					\multirow{4}{*}{$\|\cdot\|_2^2$} \\
					\phantom{$\|\cdot\|_2^2$} \\
					\phantom{$\|\cdot\|_2^2$} \\
					\phantom{$\|\cdot\|_2^2$}
				\end{tabular}
				&
				\begin{tabular}{cr}
					OT & \\
					HV & (ratio)\\
					SH & (ratio)\\
					LEX & (ratio)
				\end{tabular}
				&
				\begin{tabular}{cr}
                    $0.31$ \\ $0.31$  & ($1.01 \times$) \\ $0.35$ & ($1.12 \times$) \\ $0.42$ & ($1.36 \times$)
				\end{tabular}
				&
				\begin{tabular}{cr}
                    $0.30$ \\ $0.31$  & ($1.01 \times$) \\ $0.34$ & ($1.14 \times$) \\ $0.40$ & ($1.33 \times$)
				\end{tabular}
				&
				\begin{tabular}{cr}
                    $0.30$ \\ $0.30$  & ($1.01 \times$) \\ $0.34$ & ($1.14 \times$) \\ $0.40$ & ($1.32 \times$)
				\end{tabular}
				&
				\begin{tabular}{cr}
					$0.31$ \\ $0.30$  & ($0.98 \times$) \\ $0.34$ & ($1.11 \times$) \\ $0.39$ & ($1.27 \times$)
				\end{tabular}
				\\
				\midrule
				%
				\begin{tabular}{c}
					\multirow{4}{*}{$\|\cdot\|_2$} \\
					\phantom{$\|\cdot\|_2$} \\
					\phantom{$\|\cdot\|_2$} \\
					\phantom{$\|\cdot\|_2$}
				\end{tabular}
				&
				\begin{tabular}{cr}
					OT & \\
					HV & (ratio)\\
					SH & (ratio)\\
					LEX & (ratio)
				\end{tabular}
				&
				\begin{tabular}{cr}
                    $0.54$ \\ $0.55$  & ($1.01 \times$) \\ $0.57$ & ($1.05 \times$) \\ $0.60$ & ($1.11 \times$)
				\end{tabular}
				&
				\begin{tabular}{cr}
                    $0.54$ \\ $0.54$  & ($1.01 \times$) \\ $0.57$ & ($1.05 \times$) \\ $0.59$ & ($1.09 \times$)
				\end{tabular}
				&
				\begin{tabular}{cr}
                    $0.54$ \\ $0.54$  & ($1.01 \times$) \\ $0.57$ & ($1.05 \times$) \\ $0.59$ & ($1.09 \times$)
				\end{tabular}
				&
				\begin{tabular}{cr}
					$0.54$ \\ $0.54$  & ($1.01 \times$) \\ $0.57$ & ($1.05 \times$) \\ $0.59$ & ($1.08 \times$)
				\end{tabular}
				\\
				\midrule
				%
				\begin{tabular}{c}
					\multirow{4}{*}{$\|\cdot\|_1^2$} \\
					\phantom{$\|\cdot\|_1^2$} \\
					\phantom{$\|\cdot\|_1^2$} \\
					\phantom{$\|\cdot\|_1^2$}
				\end{tabular}
				&
				\begin{tabular}{cr}
					OT & \\
					HV & (ratio)\\
					SH & (ratio)\\
					LEX & (ratio)\\
				\end{tabular}
				&
				\begin{tabular}{cr}
                    $0.51$ \\ $0.52$  & ($1.02 \times$) \\ $0.53$ & ($1.04 \times$) \\ $0.69$ & ($1.35 \times$)
				\end{tabular}
				&
				\begin{tabular}{cr}
                    $0.51$ \\ $0.52$  & ($1.02 \times$) \\ $0.53$ & ($1.05 \times$) \\ $0.67$ & ($1.32 \times$)
				\end{tabular}
				&
				\begin{tabular}{cr}
                    $0.50$ \\ $0.51$  & ($1.02 \times$) \\ $0.53$ & ($1.05 \times$) \\ $0.65$ & ($1.30 \times$)
				\end{tabular}
				&
				\begin{tabular}{cr}
					$0.50$ \\ $0.51$  & ($1.01 \times$) \\ $0.53$ & ($1.05 \times$) \\ $0.65$ & ($1.29 \times$)
				\end{tabular}
				\\
				\midrule
				%
				\begin{tabular}{c}
					\multirow{4}{*}{$\|\cdot\|_\infty^2$} \\
					\phantom{$\|\cdot\|_\infty^2$} \\
					\phantom{$\|\cdot\|_\infty^2$} \\
					\phantom{$\|\cdot\|_\infty^2$}
				\end{tabular}
				&
				\begin{tabular}{cr}
					OT & \\
					HV & (ratio)\\
					SH & (ratio)\\
					LEX & (ratio)\\
				\end{tabular}
				&
				\begin{tabular}{cr}
                    $0.26$ \\ $0.26$  & ($1.03 \times$) \\ $0.28$ & ($1.09 \times$) \\ $0.35$ & ($1.35 \times$)
				\end{tabular}
				&
				\begin{tabular}{cr}
                    $0.25$ \\ $0.26$  & ($1.02 \times$) \\ $0.28$ & ($1.10 \times$) \\ $0.33$ & ($1.32 \times$)
				\end{tabular}
				&
				\begin{tabular}{cr}
                    $0.25$ \\ $0.26$  & ($1.02 \times$) \\ $0.28$ & ($1.10 \times$) \\ $0.33$ & ($1.30 \times$)
				\end{tabular}
				&
				\begin{tabular}{cr}
					$0.25$ \\ $0.26$  & ($1.02 \times$) \\ $0.28$ & ($1.10 \times$) \\ $0.32$ & ($1.28 \times$)
				\end{tabular}
				\\
				\bottomrule
		\end{tabular}}
	\end{center}
	\caption{Average costs of set of points equally spaced within the unit grid to the same set of points rotated $45^\circ$ counter-clockwise measured in various cost functions}
	\label{tab:grid2rotgrid}
\end{table}

%
%
%
%
%
%
%

\begin{table}
	\begin{center}
		\resizebox{0.95\textwidth}{!}{%
			\begin{tabular}{cccccc}
				\toprule
				& & \multicolumn{4}{c}{$n$} \\
				\cmidrule{3-6}
				Cost & Method & $64$ & $256$ & $1024$ & $4096$ \\
				\midrule
				\begin{tabular}{c}
					\multirow{4}{*}{$\|\cdot\|_2^2$} \\
					\phantom{$\|\cdot\|_2^2$} \\
					\phantom{$\|\cdot\|_2^2$} \\
					\phantom{$\|\cdot\|_2^2$}
				\end{tabular}
				&
				\begin{tabular}{cr}
					OT & \\
					HV & (ratio)\\
					SH & (ratio)\\
					LEX & (ratio)
				\end{tabular}
				&
				\begin{tabular}{cr}
                    $0.23$ \\ $0.49$  & ($2.15 \times$) \\ $0.24$ & ($1.08 \times$) \\ $1.14$ & ($5.05 \times$)
				\end{tabular}
				&
				\begin{tabular}{cr}
                    $0.10$ \\ $0.22$  & ($2.20 \times$) \\ $0.13$ & ($1.30 \times$) \\ $1.14$ & ($11.23 \times$)
				\end{tabular}
				&
				\begin{tabular}{cr}
                    $0.04$ \\ $0.09$  & ($2.49 \times$) \\ $0.08$ & ($2.12 \times$) \\ $1.20$ & ($32.91 \times$)
				\end{tabular}
				&
				\begin{tabular}{cr}
					$0.01$ \\ $0.04$  & ($3.08 \times$) \\ $0.06$ & ($4.55 \times$) \\ $1.18$ & ($91.63 \times$)
				\end{tabular}
				\\
				\midrule
				%
				\begin{tabular}{c}
					\multirow{4}{*}{$\|\cdot\|_2$} \\
					\phantom{$\|\cdot\|_2$} \\
					\phantom{$\|\cdot\|_2$} \\
					\phantom{$\|\cdot\|_2$}
				\end{tabular}
				&
				\begin{tabular}{cr}
					OT & \\
					HV & (ratio)\\
					SH & (ratio)\\
					LEX & (ratio)
				\end{tabular}
				&
				\begin{tabular}{cr}
                    $0.39$ \\ $0.49$  & ($1.25 \times$) \\ $0.41$ & ($1.06 \times$) \\ $0.86$ & ($2.19 \times$)
				\end{tabular}
				&
				\begin{tabular}{cr}
                    $0.19$ \\ $0.32$  & ($1.63 \times$) \\ $0.22$ & ($1.15 \times$) \\ $0.86$ & ($4.44 \times$)
				\end{tabular}
				&
				\begin{tabular}{cr}
                    $0.15$ \\ $0.24$  & ($1.64 \times$) \\ $0.19$ & ($1.28 \times$) \\ $0.97$ & ($6.68 \times$)
				\end{tabular}
				&
				\begin{tabular}{cr}
					$0.07$ \\ $0.13$  & ($1.69 \times$) \\ $0.13$ & ($1.76 \times$) \\ $0.95$ & ($12.85 \times$)
				\end{tabular}
				\\
				\midrule
				%
				\begin{tabular}{c}
					\multirow{4}{*}{$\|\cdot\|_1^2$} \\
					\phantom{$\|\cdot\|_1^2$} \\
					\phantom{$\|\cdot\|_1^2$} \\
					\phantom{$\|\cdot\|_1^2$}
				\end{tabular}
				&
				\begin{tabular}{cr}
					OT & \\
					HV & (ratio)\\
					SH & (ratio)\\
					LEX & (ratio)\\
				\end{tabular}
				&
				\begin{tabular}{cr}
                    $0.41$ \\ $0.74$  & ($1.78 \times$) \\ $0.43$ & ($1.05 \times$) \\ $1.60$ & ($3.86 \times$)
				\end{tabular}
				&
				\begin{tabular}{cr}
                    $0.14$ \\ $0.27$  & ($1.94 \times$) \\ $0.16$ & ($1.19 \times$) \\ $1.98$ & ($14.39 \times$)
				\end{tabular}
				&
				\begin{tabular}{cr}
                    $0.06$ \\ $0.16$  & ($2.49 \times$) \\ $0.10$ & ($1.61 \times$) \\ $1.92$ & ($30.63 \times$)
				\end{tabular}
				&
				\begin{tabular}{cr}
					$0.02$ \\ $0.05$  & ($3.06 \times$) \\ $0.06$ & ($3.80 \times$) \\ $1.86$ & ($117.01 \times$)
				\end{tabular}
				\\
				\midrule
				%
				\begin{tabular}{c}
					\multirow{4}{*}{$\|\cdot\|_\infty^2$} \\
					\phantom{$\|\cdot\|_\infty^2$} \\
					\phantom{$\|\cdot\|_\infty^2$} \\
					\phantom{$\|\cdot\|_\infty^2$}
				\end{tabular}
				&
				\begin{tabular}{cr}
					OT & \\
					HV & (ratio)\\
					SH & (ratio)\\
					LEX & (ratio)\\
				\end{tabular}
				&
				\begin{tabular}{cr}
                    $0.17$ \\ $0.31$  & ($1.81 \times$) \\ $0.19$ & ($1.13 \times$) \\ $0.97$ & ($5.66 \times$)
				\end{tabular}
				&
				\begin{tabular}{cr}
                    $0.09$ \\ $0.21$  & ($2.33 \times$) \\ $0.12$ & ($1.37 \times$) \\ $0.96$ & ($10.92 \times$)
				\end{tabular}
				&
				\begin{tabular}{cr}
                    $0.03$ \\ $0.08$  & ($3.28 \times$) \\ $0.07$ & ($2.64 \times$) \\ $0.97$ & ($38.32 \times$)
				\end{tabular}
				&
				\begin{tabular}{cr}
					$0.01$ \\ $0.04$  & ($3.95 \times$) \\ $0.06$ & ($5.77 \times$) \\ $0.96$ & ($99.29 \times$)
				\end{tabular}
				\\
				\bottomrule
		\end{tabular}}
	\end{center}
	\caption{Average costs of sets of points sampled from $\mathcal{N}(0,1)$ to the same set of points rotated $45^\circ$ counter-clockwise measured in various cost functions}
	\label{tab:gauss2rotgauss}
\end{table}

We summarize the results in Tables \ref{tab:ellipse2rotellipse}, \ref{tab:grid2rotgrid}, and \ref{tab:gauss2rotgauss}. Note that only the Hungarian, HV, and LEX algorithms return a map. In practice, however, the LP solution yields a plan that is nearly a map. The SH solution is a plan that can have large support, depending on the ER regularization parameter. We found that the algorithm was faster with a regularization parameter 0.01 that led to quite wide plans. We report the costs of maps or plans as appropriate. The conversion of plans to maps imposes extra costs associated with the projection that we also include.


From Tables \ref{tab:ellipse2rotellipse}, \ref{tab:grid2rotgrid}, and \ref{tab:gauss2rotgauss} we find that the accuracy of HV costs are comparable to those of SH. However, HV algorithm yields a map directly. Furthermore, HV maps perform much better than Knothe-Rosenblatt maps.

In the second set of examples, we map the uniform distribution on the unit square to the standard Gaussian, Table \ref{tab:grid2gauss}, and the latter to a rotated Gaussian with a $3:1$ aspect ration, Table \ref{tab:gauss2newgauss}. We again observe that HV maps consistently estimate the optimal cost within a factor of 2 and in some cases, achieve remarkable accuracy of a few percent.

Finally, in Figure \ref{fig:bary}, we use HV maps to construct barycenters or displacement interpolations of various shapes. The results clearly indicate that HV maps are shape-sensitive and encode geometric information.

\begin{table}
\begin{center}
\resizebox{0.95\textwidth}{!}{%
\begin{tabular}{cccccc}
\toprule
& & \multicolumn{4}{c}{$n$} \\
\cmidrule{3-6}
Cost & Method & $64$ & $256$ & $1024$ & $4096$ \\
\midrule
\begin{tabular}{c}
    \multirow{4}{*}{$\|\cdot\|_2^2$} \\
    \phantom{$\|\cdot\|_2^2$} \\
    \phantom{$\|\cdot\|_2^2$} \\
    \phantom{$\|\cdot\|_2^2$}
\end{tabular}
&
\begin{tabular}{cr}
    OT & \\
    HV & (ratio)\\
    SH & (ratio)\\
    LEX & (ratio)
\end{tabular}
&
\begin{tabular}{cr}
    $1.56$ \\ $1.58$  & ($1.01 \times$) \\ $1.60$ & ($1.02 \times$) \\ $2.86$ & ($1.83 \times$)
\end{tabular}
&
\begin{tabular}{cr}
    $1.76$ \\ $1.76$  & ($1.00 \times$) \\ $1.80$ & ($1.02 \times$) \\ $3.02$ & ($1.72 \times$)
\end{tabular}
&
\begin{tabular}{cr}
    $1.53$ \\ $1.53$  & ($1.00 \times$) \\ $1.57$ & ($1.03 \times$) \\ $2.65$ & ($1.73 \times$)
\end{tabular}
&
\begin{tabular}{cr}
    $1.55$ \\ $1.54$  & ($0.99 \times$) \\ $1.59$ & ($1.02 \times$) \\ $2.68$ & ($1.73 \times$)
\end{tabular}
\\
\midrule
%
\begin{tabular}{c}
    \multirow{4}{*}{$\|\cdot\|_2$} \\
    \phantom{$\|\cdot\|_2$} \\
    \phantom{$\|\cdot\|_2$} \\
    \phantom{$\|\cdot\|_2$}
\end{tabular}
&
\begin{tabular}{cr}
    OT & \\
    HV & (ratio)\\
    SH & (ratio)\\
    LEX & (ratio)
\end{tabular}
&
\begin{tabular}{cr}
    $0.96$ \\ $0.98$  & ($1.01 \times$) \\ $0.99$ & ($1.03 \times$) \\ $1.42$ & ($1.47 \times$)
\end{tabular}
&
\begin{tabular}{cr}
    $1.07$ \\ $1.08$  & ($1.01 \times$) \\ $1.10$ & ($1.03 \times$) \\ $1.46$ & ($1.36 \times$)
\end{tabular}
&
\begin{tabular}{cr}
    $1.05$ \\ $1.05$  & ($1.00 \times$) \\ $1.08$ & ($1.03 \times$) \\ $1.42$ & ($1.35 \times$)
\end{tabular}
&
\begin{tabular}{cr}
   $1.09$ \\ $1.09$  & ($1.00 \times$) \\ $1.12$ & ($1.03 \times$) \\ $1.45$ & ($1.33 \times$)
\end{tabular}
\\
\midrule
%
\begin{tabular}{c}
    \multirow{4}{*}{$\|\cdot\|_1^2$} \\
    \phantom{$\|\cdot\|_1^2$} \\
    \phantom{$\|\cdot\|_1^2$} \\
    \phantom{$\|\cdot\|_1^2$}
\end{tabular}
&
\begin{tabular}{cr}
    OT & \\
    HV & (ratio)\\
    SH & (ratio)\\
    LEX & (ratio)\\
\end{tabular}
&
\begin{tabular}{cr}
    $3.23$ \\ $3.31$  & ($1.03 \times$) \\ $3.25$ & ($1.01 \times$) \\ $5.44$ & ($1.69 \times$)
\end{tabular}
&
\begin{tabular}{cr}
    $2.42$ \\ $2.46$  & ($1.02 \times$) \\ $2.45$ & ($1.01 \times$) \\ $4.26$ & ($1.76 \times$)
\end{tabular}
&
\begin{tabular}{cr}
    $2.53$ \\ $2.56$  & ($1.01 \times$) \\ $2.56$ & ($1.01 \times$) \\ $4.43$ & ($1.75 \times$)
\end{tabular}
&
\begin{tabular}{cr}
    $2.52$ \\ $2.53$  & ($1.01 \times$) \\ $2.54$ & ($1.01 \times$) \\ $4.41$ & ($1.75 \times$)
\end{tabular}
\\
\midrule
%
\begin{tabular}{c}
    \multirow{4}{*}{$\|\cdot\|_\infty^2$} \\
    \phantom{$\|\cdot\|_\infty^2$} \\
    \phantom{$\|\cdot\|_\infty^2$} \\
    \phantom{$\|\cdot\|_\infty^2$}
\end{tabular}
&
\begin{tabular}{cr}
    OT & \\
    HV & (ratio)\\
    SH & (ratio)\\
    LEX & (ratio)\\
\end{tabular}
&
\begin{tabular}{cr}
    $0.97$ \\ $1.01$  & ($1.04 \times$) \\ $0.99$ & ($1.02 \times$) \\ $2.15$ & ($2.22 \times$)
\end{tabular}
&
\begin{tabular}{cr}
    $1.34$ \\ $1.35$  & ($1.01 \times$) \\ $1.36$ & ($1.02 \times$) \\ $2.27$ & ($1.70 \times$)
\end{tabular}
&
\begin{tabular}{cr}
    $1.25$ \\ $1.25$  & ($1.00 \times$) \\ $1.28$ & ($1.02 \times$) \\ $2.16$ & ($1.73 \times$)
\end{tabular}
&
\begin{tabular}{cr}
    $1.28$ \\ $1.29$  & ($1.00 \times$) \\ $1.31$ & ($1.02 \times$) \\ $2.23$ & ($1.74 \times$)
\end{tabular}
\\
\bottomrule
\end{tabular}}
\end{center}
\caption{Average costs of set of points equally spaced within a grid to a set of points sampled from $\mathcal{N}(0,1)$ measured in various cost functions}
\label{tab:grid2gauss}
\end{table}

\begin{table}
\begin{center}
\resizebox{0.95\textwidth}{!}{%
\begin{tabular}{cccccc}
\toprule
& & \multicolumn{4}{c}{$n$} \\
\cmidrule{3-6}
Cost & Method & $64$ & $256$ & $1024$ & $4096$ \\
\midrule
\begin{tabular}{c}
    \multirow{4}{*}{$\|\cdot\|_2^2$} \\
    \phantom{$\|\cdot\|_2^2$} \\
    \phantom{$\|\cdot\|_2^2$} \\
    \phantom{$\|\cdot\|_2^2$}
\end{tabular}
&
\begin{tabular}{cr}
    OT & \\
    HV & (ratio)\\
    SH & (ratio)\\
    LEX & (ratio)
\end{tabular}
&
\begin{tabular}{cr}
    $0.97$ \\ $1.49$  & ($1.54 \times$) \\ $0.98$ & ($1.02 \times$) \\ $6.52$ & ($6.72 \times$)
\end{tabular}
&
\begin{tabular}{cr}
    $1.02$ \\ $1.23$  & ($1.21 \times$) \\ $1.05$ & ($1.03 \times$) \\ $6.59$ & ($6.46 \times$)
\end{tabular}
&
\begin{tabular}{cr}
    $1.23$ \\ $1.32$  & ($1.07 \times$) \\ $1.26$ & ($1.03 \times$) \\ $7.00$ & ($5.72 \times$)
\end{tabular}
&
\begin{tabular}{cr}
    $1.14$ \\ $1.16$  & ($1.02 \times$) \\ $1.16$ & ($1.02 \times$) \\ $7.13$ & ($6.27 \times$)
\end{tabular}
\\
\midrule
%
\begin{tabular}{c}
    \multirow{4}{*}{$\|\cdot\|_2$} \\
    \phantom{$\|\cdot\|_2$} \\
    \phantom{$\|\cdot\|_2$} \\
    \phantom{$\|\cdot\|_2$}
\end{tabular}
&
\begin{tabular}{cr}
    OT & \\
    HV & (ratio)\\
    SH & (ratio)\\
    LEX & (ratio)
\end{tabular}
&
\begin{tabular}{cr}
    $1.01$ \\ $1.15$  & ($1.13 \times$) \\ $1.04$ & ($1.02 \times$) \\ $2.25$ & ($2.22 \times$)
\end{tabular}
&
\begin{tabular}{cr}
    $0.77$ \\ $0.83$  & ($1.08 \times$) \\ $0.80$ & ($1.03 \times$) \\ $2.18$ & ($2.83 \times$)
\end{tabular}
&
\begin{tabular}{cr}
    $0.78$ \\ $0.81$  & ($1.04 \times$) \\ $0.81$ & ($1.04 \times$) \\ $2.27$ & ($2.92 \times$)
\end{tabular}
&
\begin{tabular}{cr}
    $0.84$ \\ $0.85$  & ($1.02 \times$) \\ $0.87$ & ($1.04 \times$) \\ $2.31$ & ($2.75 \times$)
\end{tabular}
\\
\midrule
%
\begin{tabular}{c}
    \multirow{4}{*}{$\|\cdot\|_1^2$} \\
    \phantom{$\|\cdot\|_1^2$} \\
    \phantom{$\|\cdot\|_1^2$} \\
    \phantom{$\|\cdot\|_1^2$}
\end{tabular}
&
\begin{tabular}{cr}
    OT & \\
    HV & (ratio)\\
    SH & (ratio)\\
    LEX & (ratio)\\
\end{tabular}
&
\begin{tabular}{cr}
    $1.27$ \\ $1.74$  & ($1.38 \times$) \\ $1.28$ & ($1.01 \times$) \\ $9.42$ & ($7.44 \times$)
\end{tabular}
&
\begin{tabular}{cr}
    $1.47$ \\ $1.75$  & ($1.19 \times$) \\ $1.49$ & ($1.01 \times$) \\ $10.37$ & ($7.07 \times$)
\end{tabular}
&
\begin{tabular}{cr}
    $1.22$ \\ $1.37$  & ($1.13 \times$) \\ $1.25$ & ($1.03 \times$) \\ $11.27$ & ($9.27 \times$)
\end{tabular}
&
\begin{tabular}{cr}
    $1.09$ \\ $1.15$  & ($1.05 \times$) \\ $1.10$ & ($1.01 \times$) \\ $11.13$ & ($10.21 \times$)
\end{tabular}
\\
\midrule
%
\begin{tabular}{c}
    \multirow{4}{*}{$\|\cdot\|_\infty^2$} \\
    \phantom{$\|\cdot\|_\infty^2$} \\
    \phantom{$\|\cdot\|_\infty^2$} \\
    \phantom{$\|\cdot\|_\infty^2$}
\end{tabular}
&
\begin{tabular}{cr}
    OT & \\
    HV & (ratio)\\
    SH & (ratio)\\
    LEX & (ratio)\\
\end{tabular}
&
\begin{tabular}{cr}
    $1.39$ \\ $1.79$  & ($1.29 \times$) \\ $1.41$ & ($1.01 \times$) \\ $6.58$ & ($4.75 \times$)
\end{tabular}
&
\begin{tabular}{cr}
    $1.23$ \\ $1.49$  & ($1.21 \times$) \\ $1.26$ & ($1.02 \times$) \\ $5.97$ & ($4.85 \times$)
\end{tabular}
&
\begin{tabular}{cr}
    $1.06$ \\ $1.19$  & ($1.12 \times$) \\ $1.08$ & ($1.03 \times$) \\ $5.89$ & ($5.57 \times$)
\end{tabular}
&
\begin{tabular}{cr}
    $1.00$ \\ $1.05$  & ($1.05 \times$) \\ $1.03$ & ($1.02 \times$) \\ $5.97$ & ($5.94 \times$)
\end{tabular}
\\
\bottomrule
\end{tabular}}
\end{center}
\caption{Average costs of sets of points sampled from $\mathcal{N}(0,1)$ to a new set of points sampled from $\mathcal{N}(0,1)$ (then scaled to a 3:1 aspect ratio and rotated $90^\circ$ counter-clockwise) measured in various cost functions}
\label{tab:gauss2newgauss}
\end{table}


%
%

\section*{Appendix}

\begin{proof}[Theorem \ref{thm:half_space_pres_map}]
We assume that $\{e_i\}_{i=1}^d$ is the standard basis in $\mathbb{R}^d$ because the proof for a general basis is identical up to a multiplication by a suitable volume element.

\noindent1. Suppose that $x \in \mathrm{int}\left(\mathrm{supp}(\mu)\right)$, and $~x'\in \mathbb{R}^d,~x\neq x',$ but they never get separated by a hyperplane. Therefore, whenever a common subset containing $x,x'$ gets partitioned they always stay in the same side. Suppose that $\{A_k\}$ is the sequence of subsets that they both belong during the cutting process. By construction, we have that
\begin{equation*}
A_{k+1}\subset A_k,\quad \mu(A_{k+1})=\frac{\mu(A_k)}{2}.
\end{equation*}
Since $\{v_k\}_{k=1}^\infty \subset \{e_1,e_2,\cdots,e_d\}$ we have that
\begin{equation*}
A_k=(\alpha^k_1,\beta_1^k] \times (\alpha^k_2,\beta^k_2]\cdots \times (\alpha^k_d,\beta^k_d],
\end{equation*}
where $-\infty \leq \alpha^k_i <\beta^k _i \leq +\infty$. Denote by
\begin{equation*}
x=(x_1,x_2,\cdots,x_d),\quad x'=(x'_1,x'_2,\cdots,x'_d).
\end{equation*}
Without loss of generality, assume that
\begin{equation*}
x_i\neq x'_i,~1\leq i \leq l,\quad x_i=x'_i,~i>l.
\end{equation*}
Since $\{A_k\}$ are rectangles we have that
\begin{equation*}
R=\bigcap_k A_k,
\end{equation*}
is also a rectangle, and we denote by
\begin{equation*}
R=[\alpha_1,\beta_1]\times [\alpha_2,\beta_2]\times \cdots [\alpha_d,\beta_d],
\end{equation*}
and we have that
\begin{equation*}
\lim\limits_{k\to \infty}\alpha_i^k=\alpha_i,\quad \lim\limits_{k\to \infty}\beta_i^k=\beta_i,~1\leq i \leq d.
\end{equation*}
Since $x,x' \in R$ we have that
\begin{equation*}
\beta_i-\alpha_i \geq |x_i-x'_i|>0,~1\leq i \leq l.
\end{equation*}
Furthermore, we have that
\begin{equation*}
\mu(R)=\lim\limits_{k\to \infty} \mu(A_k)=0.
\end{equation*}
If $\mathcal{L}^d(R)>0$ then we have that $\mathrm{int}(R)\neq \emptyset$ and $\mathrm{int}(R)\cap \mathrm{supp}(\mu)=\emptyset$ which contradicts to the fact that $x \in \mathrm{int}\left(\mathrm{supp}(\mu)\right)$. Therefore, we have that $\mathcal{L}^d(R)=0$ which means that $\alpha_i=\beta_i$ for some $i>l$. Without loss of generality assume that
\begin{equation*}
\alpha_i<\beta_i,~1\leq i \leq q,\quad \alpha_i=\beta_i,~i>q.
\end{equation*}
We have that $q\geq l$. Moreover, $\alpha_i=\beta_i=x_i=x'_i$, and $-\infty<\alpha_i^k<\beta_i^k<\infty$ for all $i>q$ and $k$ large enough. Additionally, if $-\infty<\alpha_i<\beta_i<\infty$ for some $1\leq i \leq q$ then $-\infty<\alpha_i^k<\beta_i^k<\infty$ for $k$ large enough. In what follows we assume that $k$ is so large that this previous statements hold.

Furthermore, assume that $M>0$ is such that
\begin{equation*}
\mathrm{supp}(\mu) \subset [-M,M]^d.
\end{equation*}
Since $\mu=fdx$, by construction we have that
\begin{equation*}
\int_{A_k\setminus A_{k+1}} fdx= \int_{A_{k+1}} fdx,~\forall k.
\end{equation*}
Therefore, using $c\leq f \leq C,~\mu$ a.e. we get that
\begin{equation}\label{eq:c/C_estimate}
\frac{\mathcal{L}^d(A_k\setminus A_{k+1} \cap \mathrm{supp}(\mu))}{\mathcal{L}^d(A_{k+1}\cap \mathrm{supp}(\mu) ) } \geq \frac{c}{C}>0,~\forall k.
\end{equation}
Now, suppose that for some $k$ the set $A_k$ gets partitioned in the direction $e_1$. There are three possibilities: a) $-\infty<\alpha_1<\beta_1<\infty$, b) $-\infty<\alpha_1<\beta_1=\infty$, c) $-\infty=\alpha_1<\beta_1<\infty$.

\noindent a) $-\infty<\alpha_1<\beta_1<\infty$. In this case, we have that $-\infty<\alpha_1^k<\beta_1^k<\infty$ since $k$ is large enough.Therefore, either
\begin{equation*}
\begin{split}
A_{k+1}=(\alpha^k_1,\gamma] \times (\alpha^k_2,\beta^k_2]\cdots \times (\alpha^k_d,\beta^k_d],\\
A_k\setminus A_{k+1}=(\gamma,\beta^k_1] \times (\alpha^k_2,\beta^k_2]\cdots \times (\alpha^k_d,\beta^k_d],
\end{split}
\end{equation*}
or
\begin{equation*}
\begin{split}
A_{k+1}=(\gamma,\beta^k_1] \times (\alpha^k_2,\beta^k_2]\cdots \times (\alpha^k_d,\beta^k_d],\\
A_k\setminus A_{k+1}=(\alpha^k_1,\gamma] \times (\alpha^k_2,\beta^k_2]\cdots \times (\alpha^k_d,\beta^k_d],
\end{split}
\end{equation*}
for some $\alpha_1^k<\gamma<\beta_1^k$. Suppose that we are in the former case.

Since $x\in \mathrm{int}(\mathrm{supp}(\mu))$ we have that there exists a $\sigma>0$ such that
\begin{equation*}
\times_{i=1}^d [x_i-\sigma,x_i+\sigma] \subset \mathrm{supp}(\mu).
\end{equation*}
We have that
\begin{equation*}
A_k\setminus A_{k+1} \cap \mathrm{supp}(\mu) \subset A_k\setminus A_{k+1}\cap [-M,M]^d,
\end{equation*}
and therefore
\begin{equation*}
\begin{split}
&\mathcal{L}^d(A_k\setminus A_{k+1} \cap \mathrm{supp}(\mu)) \\
\leq& \mathcal{L}^d(A_k\setminus A_{k+1}\cap [-M,M]^d)\leq  (\beta_1^k-\gamma) \prod_{i=2}^d \min \{\beta_i^k-\alpha_i^k,2M\}\\
\leq &(\beta_1^k-\beta_1) (2M)^{q-1} \prod_{i>q} (\beta_i^k-\alpha_i^k),
\end{split}
\end{equation*}
where we used the fact that $\beta_1\leq \gamma < \beta_1^k$ and
\begin{equation*}
\lim\limits_{k\to \infty} \alpha_i^k= \lim\limits_{k\to \infty} \beta_i^k=x_i,~i>q.
\end{equation*}
On the other hand, we have that
\begin{equation*}
A_{k+1} \cap \mathrm{supp}(\mu) \supset A_{k+1} \cap \times_{i=1}^d [x_i-\sigma,x_i+\sigma],
\end{equation*}
therefore
\begin{equation*}
\begin{split}
&\mathcal{L}^d(A_{k+1} \cap \mathrm{supp}(\mu))\\
\geq & \mathcal{L}^d(A_{k+1} \cap \times_{i=1}^d [x_i-\sigma,x_i+\sigma])\geq \min\{\gamma-\alpha_1^k,\sigma\} \prod_{i=2}^d \min\{ \beta_i^k-\alpha_i^k,\sigma  \}\\
\geq & \prod_{i=1}^q \min\{ \beta_i-\alpha_i,\sigma  \} \prod_{i>q} (\beta_i^k-\alpha_i^k).
\end{split}
\end{equation*}
Hence, we obtain that
\begin{equation*}
\frac{\mathcal{L}^d(A_k\setminus A_{k+1} \cap \mathrm{supp}(\mu))}{\mathcal{L}^d(A_{k+1} \cap \mathrm{supp}(\mu))} \leq (\beta_1^k-\beta_1) \frac{(2M)^{q-1}}{\prod_{i=1}^q \min\{ \beta_i-\alpha_i,\sigma  \}}.
\end{equation*}
Similarly, if $x,x'$ fall in the upper (in $e_1$ direction) half of $A_k$ we get that
\begin{equation*}
\frac{\mathcal{L}^d(A_k\setminus A_{k+1} \cap \mathrm{supp}(\mu))}{\mathcal{L}^d(A_{k+1} \cap \mathrm{supp}(\mu))} \leq (\alpha_1-\alpha_1^k) \frac{(2M)^{q-1}}{\prod_{i=1}^q \min\{ \beta_i-\alpha_i,\sigma  \}}.
\end{equation*}

\noindent b) $-\infty<\alpha_1<\beta_1=\infty$. In this case we have that $-\infty<\alpha_1^k<\beta_1^k=\infty$, and
\begin{equation*}
\begin{split}
A_{k+1}=(\gamma,\infty] \times (\alpha^k_2,\beta^k_2]\cdots \times (\alpha^k_d,\beta^k_d],\\
A_k\setminus A_{k+1}=(\alpha^k_1,\gamma] \times (\alpha^k_2,\beta^k_2]\cdots \times (\alpha^k_d,\beta^k_d],
\end{split}
\end{equation*}
for some $\alpha_1^k<\gamma<\infty$. As before, we have that
\begin{equation*}
\begin{split}
& \mathcal{L}^d(A_k\setminus A_{k+1} \cap \mathrm{supp}(\mu))\\
& \leq (\gamma-\alpha_1^k) (2M)^{q-1} \prod_{i>q}(\beta_i^k-\alpha_i^k) \leq (\alpha_1-\alpha_1^k) (2M)^{q-1} \prod_{i>q}(\beta_i^k-\alpha_i^k).
\end{split}
\end{equation*}
Similarly, we have that
\begin{equation*}
\mathcal{L}^d(A_{k+1} \cap \mathrm{supp}(\mu)) \geq \prod_{i=1}^q \min\{ \beta_i-\alpha_i,\sigma  \} \prod_{i>q} (\beta_i^k-\alpha_i^k),
\end{equation*}
and thus
\begin{equation*}
\frac{\mathcal{L}^d(A_k\setminus A_{k+1} \cap \mathrm{supp}(\mu))}{\mathcal{L}^d(A_{k+1} \cap \mathrm{supp}(\mu))} \leq (\alpha_1-\alpha_1^k) \frac{(2M)^{q-1}}{\prod_{i=1}^q \min\{ \beta_i-\alpha_i,\sigma  \}}.
\end{equation*}

\noindent c) $-\infty=\alpha_1<\beta_1<\infty$. In this case, we have that $-\infty=\alpha_1^k<\beta_1^k<\infty$, and
\begin{equation*}
\frac{\mathcal{L}^d(A_k\setminus A_{k+1} \cap \mathrm{supp}(\mu))}{\mathcal{L}^d(A_{k+1} \cap \mathrm{supp}(\mu))} \leq (\beta_1^k-\beta_1) \frac{(2M)^{q-1}}{\prod_{i=1}^q \min\{ \beta_i-\alpha_i,\sigma  \}}.
\end{equation*}

Summarizing, we get whenever $A_k$ gets partitioned in $e_1$ we have that
\begin{equation*}
\frac{\mathcal{L}^d(A_k\setminus A_{k+1} \cap \mathrm{supp}(\mu))}{\mathcal{L}^d(A_{k+1} \cap \mathrm{supp}(\mu))} \leq o(1).
\end{equation*}
We get similar estimates for partitions in any of the directions $\{e_i\}_{i=1}^q$. Thus, if we take a subsequence $\{A_{k_m}\}$ that get partitioned in one of these directions we get that
\begin{equation*}
\lim\limits_{m\to \infty} \frac{\mathcal{L}^d(A_{k_m}\setminus A_{k_m+1} \cap \mathrm{supp}(\mu))}{\mathcal{L}^d(A_{k_m+1} \cap \mathrm{supp}(\mu))} =0,
\end{equation*}
which contradicts to \eqref{eq:c/C_estimate}. Thus, the first item is proven.

\noindent2. Firstly, we will show that for every $x\in \mathrm{int}(\mathrm{supp}(\mu))$ there exists $y \in \mathrm{supp}(\nu)$ such that
\begin{equation*}
\hat{s}(x)=\hat{r}(y).
\end{equation*}
Assume that $\{A_k\}$ is the sequence of partition sets that contain $x$. Again, we have that
\begin{equation*}
A_{k+1}\subset A_k,\quad \mu(A_{k+1})=\frac{\mu(A_k)}{2},
\end{equation*}
and
\begin{equation*}
A_k=(\alpha^k_1,\beta_1^k] \times (\alpha^k_2,\beta^k_2]\cdots \times (\alpha^k_d,\beta^k_d],
\end{equation*}
for some $-\infty \leq \alpha^k_i <\beta^k _i \leq +\infty$. Moreover, from the previous item  we obtain  that
\begin{equation*}
\bigcap_k A_k=\{x\},
\end{equation*}
because $x\in \mathrm{int}(\mathrm{supp}(\mu))$, and the intersection cannot contain any other point. Therefore, we have that $-\infty < \alpha^k_i <\beta^k _i < +\infty$, and
\begin{equation*}
\alpha^k_i \nearrow x_i,\quad \beta^k_i \searrow x_i,~ \mbox{as}~k\to \infty,
\end{equation*}
where $x=(x_1,x_2,\cdots,x_d)$.

Denote by $\{B_k\}$ the dual sequence of $\{A_k\}$ that partition $\nu$. Again, we have that
\begin{equation*}
B_k=(\gamma^k_1,\delta_1^k] \times (\gamma^k_2,\delta^k_2]\cdots \times (\gamma^k_d,\delta^k_d],
\end{equation*}
for some $-\infty < \gamma^k_i <\delta^k _i < +\infty$. Thus, our first task is to show that
\begin{equation*}
\bigcap_k B_k \cap \mathrm{supp}(\nu)\neq \emptyset.
\end{equation*}

Since $\alpha_i^k<x_i$ we have that $\{\alpha_i^k\}_k$ is not an eventually constant sequence. Therefore, $\{\gamma^k_i\}_k$ is also not an eventually constant sequence. Besides, $\{\gamma^k_i\}_k$ and $\{\delta^k_i\}_k$ are, respectively, nondecreasing and nonincreasing sequences. Therefore, we have that
\begin{equation*}
W_x=\bigcap_k B_k =[\gamma_1,\delta_1]\times [\gamma_2,\delta_2] \cdots \times [\gamma_d,\delta_d],
\end{equation*}
where
\begin{equation*}
\gamma_i=\sup_k \gamma_i^k,\quad \delta_i=\inf_k \delta_i^k,~1\leq i \leq d.
\end{equation*}
In fact, we have that
\begin{equation*}
W_x=\bigcap_k \mathrm{cl}(B_k).
\end{equation*}
Since $\nu(B_k)>0$ we have that $\mathrm{cl}(B_k) \cap \mathrm{supp}(\nu) \neq \emptyset$. Thus,
\begin{equation*}
\{\mathrm{cl}(B_k) \cap \mathrm{supp}(\nu)\}_k
\end{equation*}
is a nested family of nonempty compact sets. Therefore, we have that
\begin{equation*}
W_x\cap \mathrm{supp}(\nu)= \bigcap_k \mathrm{cl}(B_k) \cap \mathrm{supp}(\nu) \neq \emptyset.
\end{equation*}
If $W_x\cap \mathrm{int}(\mathrm{supp}(\nu)) \neq \emptyset$ then by item 1, we get that
\begin{equation*}
W_x=\{y\},
\end{equation*}
for some $y \in \mathrm{int}(\mathrm{supp}(\nu))$. Hence, to complete the proof of item 1, we need to show that there exists a $F_0 \in \mathcal{B}(\mathbb{R}^d)$ such that $\mu(F_0)=0$, and
\begin{equation*}
W_x \cap \mathrm{int}(\mathrm{supp}(\nu)) \neq \emptyset,\quad \forall x\notin \mathrm{int}(\mathrm{supp}(\mu))\setminus F_0.
\end{equation*}
For every $k$ denote by
\begin{equation*}
\Delta'_k=\{B \in \Delta_k ~\mbox{s,t.}~B \cap \partial(\mathrm{supp}(\nu)) \neq \emptyset   \},
\end{equation*}
and
\begin{equation*}
\Delta''_k=\Delta_k \setminus \Delta'_k.
\end{equation*}
Since
\begin{equation*}
\bigcup_{\Delta_k} B =\mathbb{R}^d,
\end{equation*}
we obtain that
\begin{equation*}
\partial(\mathrm{supp}(\nu) ) \subset \bigcup_{\Delta'_k} B=H_k.
\end{equation*}
Furthermore, for every $B \in \Delta_k''$ we have that $\nu(B)>0$, and therefore $B \cap \mathrm{supp}(\nu) \neq \emptyset$. On the other hand, $B \cap \partial(\mathrm{supp}(\nu)) = \emptyset$, and $B$ is connected. Hence, $B \subset \mathrm{int}(\mathrm{supp}(\nu))$, and
\begin{equation*}
G_k= \bigcup_{\Delta''_k} B \subset \mathrm{int}(\mathrm{supp}(\nu)).
\end{equation*}
Note that
\begin{equation*}
H_k \supset H_{k+1},\quad G_k \subset G_{k+1},~\forall k.
\end{equation*}
By item 1 we have that for every $y \in \mathrm{int}(\mathrm{supp}(\nu))$ there exists a partition rectangle $B$ such that $y\in B \subset \mathrm{int}(\mathrm{supp}(\nu))$. Hence, $B \in \Delta''_k$ for some $k$, and $y \in G_k$. Therefore, we obtain that
\begin{equation*}
\bigcup_k G_k =\mathrm{int}(\mathrm{supp}(\nu)).
\end{equation*}
Consequently,
\begin{equation*}
\bigcap_k H_k =\mathbb{R}^d \setminus \mathrm{int}(\mathrm{supp}(\nu)) \supset \partial(\mathrm{supp}(\nu)).
\end{equation*}
Since $\nu(\mathrm{int}(\mathrm{supp}(\nu)))=1$ we get that
\begin{equation*}
\nu\left( \bigcup_k G_k \right)=1,\quad \nu\left( \bigcap_k H_k \right)=0.
\end{equation*}
Denote by $\Omega_k'$ and $\Omega_k''$ the families dual to $\Delta_k'$ and $\Delta_k''$. Furthermore, denote by
\begin{equation*}
F_k= \bigcup_{\Omega_k'} A,\quad E_k= \bigcup_{\Omega_k''} A.
\end{equation*}
By construction, we have that
\begin{equation*}
F_k \supset F_{k+1},\quad E_k \subset E_{k+1},~\forall k.
\end{equation*}
Moreover,
\begin{equation*}
\mu(F_k)=\nu(H_k),\quad \mu(E_k)=\nu(G_k).
\end{equation*}
Denote by
\begin{equation*}
F_0=\bigcap_k F_k.
\end{equation*}
Then, we have that $F_0 \in \mathcal{B}(\mathbb{R}^d)$, and
\begin{equation*}
\mu(F_0)=\lim\limits_{k\to \infty} \mu(F_k)=\lim\limits_{k\to \infty} \nu(H_k)=\nu\left( \bigcap_k H_k \right)=0.
\end{equation*}
Finally, note that if $W_x \cap \mathrm{int}(\mathrm{supp}(\nu))=\emptyset$ then $x\in F_0$.

\noindent 3. From items 1, 2 we have that the map $\hat{t}: \mathrm{int}(\mathrm{supp}(\mu)) \setminus F_0 \to \mathrm{int}(\mathrm{supp}(\nu))$ given by
\begin{equation*}
\hat{t}(x)=\hat{r}^{-1}(\hat{s}(x)),
\end{equation*}
is well defined. Our first task is to show that $\hat{t}$ is Borel measurable. For that, we need to show that $\hat{t}^{-1}(G) \in \mathcal{B}(\mathbb{R}^d)$ for any open set $G \subset \mathbb{R}^d$. Since $\mathrm{Im}(\hat{t}) \subset \mathrm{int}(\mathrm{supp}(\nu))$ we have that
\begin{equation*}
\hat{t}^{-1}(G)= \hat{t}^{-1}\left(G \cap \mathrm{int}(\mathrm{supp}(\nu)) \right).
\end{equation*}
Therefore, we may assume that $G \subset \mathrm{int}(\mathrm{supp}(\nu))$. Furthermore, denote by
\begin{equation*}
\Delta_k''=\left\{ B \in \Delta_k~\mbox{s.t.}~B\subset G   \right\},\quad \Delta_k'=\Delta_k \setminus \Delta_k''.
\end{equation*}
Next, define
\begin{equation*}
G_k=\bigcup_{\Delta''_k} B.
\end{equation*}
Then we have that
\begin{equation*}
G_{k}\subset G_{k+1},\quad G_k \subset G,~\forall k.
\end{equation*}
From item 1, we have that for every $y\in G$ there exists a partition set $B$ such that $y\in B \subset G$. Hence, we get that
\begin{equation*}
\bigcup_k G_k=G.
\end{equation*}
This means that
\begin{equation*}
\hat{t}^{-1}(G)=\bigcup_k \hat{t}^{-1}(G_k)=\bigcup_k \bigcup_{\Delta''_k} \hat{t}^{-1}(B).
\end{equation*}
On the other hand, from equation \eqref{eq:hat_t^-1(B)} in Theorem \ref{thm:hat(t)} we have that
\begin{equation*}
\hat{t}^{-1}(G)=\bigcup_k \bigcup_{\Omega''_k} (A \cap \mathrm{int}(\mathrm{supp}(\mu)) \setminus F_0),
\end{equation*}
where $\Omega_k',\Omega_k''$ are the dual families of $\Delta_k',\Delta_k''$. Therefore, we have that $\hat{t}^{-1}(G) \in \mathcal{B}(\mathbb{R}^d)$. Furthermore, denote by
\begin{equation*}
F_k= \bigcup_{\Omega''_k} (A \setminus F_0),~\forall k.
\end{equation*}
By construction, we have that
\begin{equation*}
F_k \subset F_{k+1},~\forall k,
\end{equation*}
and hence
\begin{equation*}
\mu\left(\hat{t}^{-1}(G)\right)= \lim\limits_{k\to \infty} \mu(F_k).
\end{equation*}
On the other hand, from construction and equations $\mu(F_0)=0$, $\mu(\mathrm{int}(\mathrm{supp}(\mu)))=1$, we have that
\begin{equation*}
\mu(F_k)=\sum_{\Omega_k''} \mu(A)= \sum_{\Delta_k''} \nu(B)=\nu(G_k).
\end{equation*}
Therefore, we obtain that
\begin{equation*}
\mu\left(\hat{t}^{-1}(G)\right)= \lim\limits_{k\to \infty} \mu(F_k)= \lim\limits_{k\to \infty} \nu(G_k)=\nu(G).
\end{equation*}
Thus, $\hat{t}$ is Borel measurable and $\hat{t}\sharp \mu=\nu$. Finally, from item 1 we have that points in $\mathrm{Im}(\hat{t})\subset \mathrm{int}(\mathrm{supp}(\nu))$ get eventually separated. Therefore, by Theorem \ref{thm:hat(t)} we obtain that $\hat{t}$ is half-space-preserving.
\end{proof}

\begin{proof}[Theorem \ref{thm:hat(t)_cty}]
Denote by $H$ the union of all the hyperplanes that partition $\mu$. Then we have that $\mathcal{L}^d(H)=\mu(H)=0$. We prove that $\hat{t}$ is continuous on $\mathrm{Dom}(\hat{t})\setminus H$.

Suppose $x\in \mathrm{Dom}(\hat{t})\setminus H$, and $y=\hat{t}(x)$. Furthermore, denote by $\{A_k\}$ and $\{B_k\}$ the rectangles that contain $x$ and $y$, respectively. We have that
\begin{equation*}
\bigcap_k A_k=\{x\},\quad  \bigcap_k B_k=\{y\}.
\end{equation*}
Moreover, since $x\notin H$ we have that $x\in \mathrm{int}(A_k)$ for all $k$. Therefore, by construction, we have that $y\in \mathrm{int}(B_k)$ for all $k$. Thus, we obtain that
\begin{equation*}
\bigcap_k \mathrm{int}(A_k)=\{x\},\quad  \bigcap_k \mathrm{int}(B_k)=\{y\},
\end{equation*}
which yields the continuity.
\end{proof}


\bibliographystyle{spmpsci}

\begin{thebibliography}{10}
	\providecommand{\url}[1]{{#1}}
	\providecommand{\urlprefix}{URL }
	\expandafter\ifx\csname urlstyle\endcsname\relax
	\providecommand{\doi}[1]{DOI~\discretionary{}{}{}#1}\else
	\providecommand{\doi}{DOI~\discretionary{}{}{}\begingroup
		\urlstyle{rm}\Url}\fi
	
	\bibitem{altschuler2018approximating}
	Altschuler, J., Bach, F., Rudi, A., Weed, J.: Approximating the quadratic
	transportation metric in near-linear time.
	\newblock arXiv preprint arXiv:1810.10046  (2018)
	
	\bibitem{Altschuler2017}
	Altschuler, J., Weed, J., Rigollet, P.: Near-linear time approximation
	algorithms for optimal transport via sinkhorn iteration.
	\newblock In: Proceedings of the 31st International Conference on Neural
	Information Processing Systems, NIPS'17, pp. 1961--1971. Curran Associates
	Inc., USA (2017).
	\newblock \urlprefix\url{http://dl.acm.org/citation.cfm?id=3294771.3294958}
	
	\bibitem{ambrosio'08}
	Ambrosio, L., Gigli, N., Savar\'{e}, G.: Gradient flows in metric spaces and in
	the space of probability measures, second edn.
	\newblock Lectures in Mathematics ETH Z\"{u}rich. Birkh\"{a}user Verlag, Basel
	(2008)
	
	\bibitem{blum'73}
	Blum, M., Floyd, R.W., Pratt, V., Rivest, R.L., Tarjan, R.E.: Time bounds for
	selection.
	\newblock Journal of Computer and System Sciences \textbf{7}(4), 448 -- 461
	(1973).
	\newblock \doi{https://doi.org/10.1016/S0022-0000(73)80033-9}.
	\newblock
	\urlprefix\url{http://www.sciencedirect.com/science/article/pii/S0022000073800339}
	
	\bibitem{cuturi2013sinkhorn}
	Cuturi, M.: Sinkhorn distances: Lightspeed computation of optimal transport.
	\newblock In: Advances in neural information processing systems, pp. 2292--2300
	(2013)
	
	\bibitem{cuturi'14}
	Cuturi, M., Doucet, A.: Fast computation of wasserstein barycenters.
	\newblock In: E.P. Xing, T.~Jebara (eds.) Proceedings of the 31st International
	Conference on Machine Learning, \emph{Proceedings of Machine Learning
		Research}, vol.~32, pp. 685--693. PMLR, Bejing, China (2014).
	\newblock \urlprefix\url{http://proceedings.mlr.press/v32/cuturi14.html}
	
	\bibitem{dasgupta2008random}
	Dasgupta, S., Freund, Y.: Random projection trees and low dimensional
	manifolds.
	\newblock Citeseer (2008)
	
	\bibitem{dasgupta2013randomized}
	Dasgupta, S., Sinha, K.: Randomized partition trees for exact nearest neighbor
	search.
	\newblock In: Conference on Learning Theory, pp. 317--337 (2013)
	
	\bibitem{flamary2017pot}
	Flamary, R., Courty, N.: {POT} {P}ython optimal transport library (2017).
	\newblock \urlprefix\url{https://github.com/rflamary/POT}
	
	\bibitem{Genevay2016}
	Genevay, A., Cuturi, M., Peyr{\'e}, G., Bach, F.: Stochastic optimization for
	large-scale optimal transport.
	\newblock In: Proceedings of the 30th International Conference on Neural
	Information Processing Systems, NIPS'16, pp. 3440--3448. Curran Associates
	Inc., USA (2016).
	\newblock \urlprefix\url{http://dl.acm.org/citation.cfm?id=3157382.3157482}
	
	\bibitem{Indyk:2004}
	Indyk, P.: Algorithms for dynamic geometric problems over data streams.
	\newblock In: Proceedings of the Thirty-sixth Annual ACM Symposium on Theory of
	Computing, STOC '04, pp. 373--380. ACM, New York, NY, USA (2004).
	\newblock \doi{10.1145/1007352.1007413}.
	\newblock \urlprefix\url{http://doi.acm.org/10.1145/1007352.1007413}
	
	\bibitem{indyk2004nearest}
	Indyk, P.: Nearest neighbors in high-dimensional spaces.
	\newblock Handbook of Discrete and Computational Geometry. Citeseer (2004)
	
	\bibitem{Indyk:2007}
	Indyk, P.: A near linear time constant factor approximation for euclidean
	bichromatic matching (cost).
	\newblock In: Proceedings of the Eighteenth Annual ACM-SIAM Symposium on
	Discrete Algorithms, SODA '07, pp. 39--42. Society for Industrial and Applied
	Mathematics, Philadelphia, PA, USA (2007).
	\newblock \urlprefix\url{http://dl.acm.org/citation.cfm?id=1283383.1283388}
	
	\bibitem{Indyk:2003}
	Indyk, P., Thaper, N.: Fast image retrieval via embeddings
	
	\bibitem{matt'18}
	Jacobs, M., L\'{e}ger, F.: A fast approach to optimal transport: the
	back-and-forth method.
	\newblock Preprint  (2019).
	\newblock \urlprefix\url{https://arxiv.org/abs/1905.12154}.
	\newblock ArXiv:1905.12154 [math.OC]
	
	\bibitem{johnson2019billion}
	Johnson, J., Douze, M., J{\'e}gou, H.: Billion-scale similarity search with
	gpus.
	\newblock IEEE Transactions on Big Data  (2019)
	
	\bibitem{knothe'57}
	Knothe, H.: Contributions to the theory of convex bodies.
	\newblock Michigan Math. J. \textbf{4}, 39--52 (1957).
	\newblock \urlprefix\url{http://projecteuclid.org/euclid.mmj/1028990175}
	
	\bibitem{li2017fast}
	Li, K., Malik, J.: Fast k-nearest neighbour search via prioritized dci.
	\newblock In: Proceedings of the 34th International Conference on Machine
	Learning-Volume 70, pp. 2081--2090. JMLR. org (2017)
	
	\bibitem{musser'97}
	Musser, D.R.: Introspective sorting and selection algorithms.
	\newblock Software: Practice and Experience \textbf{27}(8), 983--993 (1997).
	\newblock
	\doi{10.1002/(SICI)1097-024X(199708)27:8<983::AID-SPE117>3.0.CO;2-\#}.
	\newblock
	\urlprefix\url{https://onlinelibrary.wiley.com/doi/abs/10.1002/%28SICI%291097-024X%28199708%2927%3A8%3C983%3A%3AAID-SPE117%3E3.0.CO%3B2-%23}
		
		\bibitem{peyre'19}
		{Peyré}, G., {Cuturi}, M.: Computational Optimal Transport: With Applications
		to Data Science.
		\newblock now (2019).
		\newblock \urlprefix\url{https://ieeexplore.ieee.org/document/8641476}
		
		\bibitem{radha96}
		{Radha}, H., {Vetterli}, M., {Leonardi}, R.: Image compression using binary
		space partitioning trees.
		\newblock IEEE Transactions on Image Processing \textbf{5}(12), 1610--1624
		(1996).
		\newblock \doi{10.1109/83.544569}
		
		\bibitem{rosen'52}
		Rosenblatt, M.: Remarks on a multivariate transformation.
		\newblock Ann. Math. Statistics \textbf{23}, 470--472 (1952).
		\newblock \doi{10.1214/aoms/1177729394}.
		\newblock \urlprefix\url{https://doi.org/10.1214/aoms/1177729394}
		
		\bibitem{schmitzer2019stabilized}
		Schmitzer, B.: Stabilized sparse scaling algorithms for entropy regularized
		transport problems.
		\newblock SIAM Journal on Scientific Computing \textbf{41}(3), A1443--A1481
		(2019)
		
		\bibitem{schrijver2003combinatorial}
		Schrijver, A.: Combinatorial optimization: polyhedra and efficiency, vol.~24.
		\newblock Springer Science \& Business Media (2003)
		
		\bibitem{villani'03}
		Villani, C.: Topics in optimal transportation, \emph{Graduate Studies in
			Mathematics}, vol.~58.
		\newblock American Mathematical Society, Providence, RI (2003).
		\newblock \doi{10.1007/b12016}.
		\newblock \urlprefix\url{https://doi.org/10.1007/b12016}
		
		\bibitem{villani'09}
		Villani, C.: Optimal transport, \emph{Grundlehren der Mathematischen
			Wissenschaften [Fundamental Principles of Mathematical Sciences]}, vol. 338.
		\newblock Springer-Verlag, Berlin (2009).
		\newblock \doi{10.1007/978-3-540-71050-9}.
		\newblock \urlprefix\url{https://doi.org/10.1007/978-3-540-71050-9}.
		\newblock Old and new
		
		\bibitem{Wu92}
		{Wu}, X.: Image coding by adaptive tree-structured segmentation.
		\newblock IEEE Transactions on Information Theory \textbf{38}(6), 1755--1767
		(1992).
		\newblock \doi{10.1109/18.165448}
		
	\end{thebibliography}

\end{document}